\documentclass[12pt,a4paper]{article}

\usepackage[utf8]{inputenc}
\usepackage[english]{babel}
\usepackage[T1]{fontenc}
\usepackage{amsmath,amssymb,amsthm}
\usepackage{graphicx}
\usepackage[margin=2.5cm]{geometry}
\usepackage{hyperref}
\usepackage{natbib,setspace}
\usepackage{algorithm,comment}
\usepackage{algorithmic}
\usepackage{booktabs}
\usepackage{xcolor,url} % for writting in colour
\usepackage[normalem]{ulem} % for striking out text
\usepackage{float}
\usepackage{enumitem}
\usepackage{multirow,comment}
\usepackage{soul}

\theoremstyle{plain}

\newtheorem{proposition}{Proposition}[section]

\theoremstyle{definition}
\newtheorem{definition}{Definition}[section]

\hypersetup{
    colorlinks=true,
    linkcolor=blue,
    citecolor=blue,
    urlcolor=blue
}

\definecolor{myblue}{RGB}{0, 60, 160}      
\definecolor{mypurple}{RGB}{100, 0, 160}   
\definecolor{newred}{RGB}{180, 30, 30}
\definecolor{newnewred}{RGB}{180, 0, 120}

\title{Dimensional Decomposition and Column Generation for Bin Dimensioning in E-Commerce Fulfillment Centers}
\author{
    Gabriel González\thanks{Instituto Tecnológico de Aeronáutica (ITA), São José dos Campos, Brazil.}
    \and
    Mariá C. V. Nascimento\thanks{Instituto Tecnológico de Aeronáutica (ITA), São José dos Campos, Brazil.}
}
\date{}

\begin{document}
\maketitle
\onehalfspacing

\begin{abstract}

A fulfillment center (FC) is a specialized logistics facility where e-commerce inventory is received, stored, and processed. Its internal organization directly impacts space utilization and the performance of put-away and picking operations. Items are stored in bins following strict operational guidelines to ensure productivity in both activities, so bin dimensions must be tailored to the product profile of each facility to maximize space utilization.
We address the bin dimensioning problem arising in e-commerce fulfillment centers: given a set of candidate bin types and a large, heterogeneous inventory, determine the number of bins of each type and assign all items so as to minimize total bin volume, subject to operational constraints governing how products may be combined within a single bin. We develop a dimensional decomposition that exploits these constraints to reduce the three-dimensional packing problem to a one-dimensional block-positioning problem. Building on this decomposition, we formulate the problem as a mixed-integer linear program and propose two solution methods: a Best-Fit-Decreasing (BFD) heuristic and a column generation scheme that produces tight LP lower bounds. Computational experiments on four synthetic datasets, calibrated on proprietary data from an American e-commerce fintech and ranging from 1.5 thousand to 1.5 million SKUs, show that the BFD heuristic scales to instances with up to 1.7 million blocks, and that column generation certifies a BFD--LP gap of 2.26\% on the smallest dataset. The experiments further reveal a structural divide in solution quality across datasets, driven by the interaction between operational constraints and each facility's product profile.

\medskip
\noindent\textbf{Keywords:} bin packing, bin dimensioning, column generation,
e-commerce logistics, dimensional decomposition.
\end{abstract}

%==============================================================================
\section{Introduction}
\label{sec:introduction}
%==============================================================================
 
The rapid growth of e-commerce has led companies such as Amazon, Alibaba, and
Mercado Libre to develop increasingly large and complex logistics networks. At
the core of these networks are fulfillment centers---warehouses where products
from thousands of sellers are received, stored, and subsequently picked, packed,
and shipped to customers. These facilities handle an enormous variety of
products: a single center may store millions of individual items spanning
hundreds of thousands of distinct  Stock Keeping Units (SKUs), each of which with
different dimensions, weights, and replenishment profiles. Products are
organized inside storage structures known as shelves and racks, and subdivided into
\emph{bins}, the smallest individually addressable storage compartments. The
dimensions of these bins have a direct and significant impact on warehouse
performance: bins that are too large for the product profile waste space and
reduce storage capacity; bins that are too small may be unable to accommodate
many products or force excessive fragmentation of inventory. Beyond capacity,
bin dimensions also affect picking productivity, since operators must visually locate
products among potentially many cohabitants, and put-away efficiency, since
workers must find appropriate bins for incoming
stock~\citep{gu2007research,gu2010research,dekoster2007design}.
 
For specialized retailers that sell a narrow range of products in large
quantities, bin sizing is straightforward: bins can be tailored to the specific
dimensions of each product line. E-commerce companies face a fundamentally
different challenge. Their catalogs contain a vast diversity of SKUs with very
few units of each in stock, and multiple distinct SKUs typically share the same
bin. This makes the  bin dimensioning problem, in which the aim is to determine which bin types to use and in what proportions, far more complex, since it requires jointly considering the geometric compatibility of thousands of heterogeneous products. This complexity is compounded by operational constraints that govern how these products may be combined within a single bin.

This paper addresses the bin dimensioning problem as presented by an American e-commerce fintech,
one of the largest e-commerce platforms worldwide, whose distribution centers
range from a few thousand to millions of distinct SKUs.
Since the bin catalog must accommodate worst-case storage requirements, the inventory used as input is designed to represent a peak-demand scenario, such as the days surrounding Black Friday, based on statistical characteristics extracted from the company's real data.
Given a catalog of candidate bin types, defined by
their length, width and height, and an inventory instance to be stored, the
goal is to determine the number of bins of each type and assign all items to bins so as to
minimize the total bin volume used, subject to three operational constraints: a
limit on the number of distinct SKUs per bin (to preserve picking productivity),
a limit on the number of units of each SKU per bin (reflecting replenishment
patterns), and a stacking rule that prohibits placing items of different SKUs on
top of or behind one another (to guarantee direct accessibility). This problem is a large-scale variant of the assortment problem, with the shipper-sizing problem studied by \citet{alonso2016determining} as its closest antecedent. It differs from the existing literature in the geometry induced by an operational stacking rule and in its instance scale, which is several orders of magnitude larger.

The scale of real instances, with up to $1.5 \times 10^6$~SKUs and $4 \times
10^6$~items, rules out any direct approach based on three-dimensional geometric
packing, which would require quadratic non-overlap constraints between pairs of
items. A central contribution of this work is a \emph{dimensional decomposition}
that exploits the interaction between the stacking constraint and the structure
of identical items within each SKU to reduce the original three-dimensional
problem to a one-dimensional block-positioning problem. This reduction is non-trivial: it requires aggregating items into
blocks, proving that only the X-axis coordinate matters for non-overlap between
blocks of distinct SKUs, and showing that the optimal X-dimension of each block
can be computed in $O(1)$ time for any given bin type.
Built on this one-dimensional representation, we propose a compact MILP
formulation that captures all operational constraints, and a column generation
scheme with pricing subproblems that yields tight LP lower bounds---certifying,
for instance, a 2.26\% BFD--LP gap on one of our datasets. To handle the
largest instances, we design a Best-Fit-Decreasing (BFD) heuristic that scales
to 1.7~million blocks. We validate the entire methodology on four synthetic datasets generated from proprietary real-world data provided by the company.
This work constitutes the first stage of a broader storage design project; subsequent stages will address the distribution of bins
across the physical shelving structure and the assignment of items to specific
bin positions to minimize operator effort during put-away and picking.

The remainder of this paper is organized as follows.
Section~\ref{sec:related_works} reviews the related literature.
Section~\ref{sec:problem_definition} defines the problem formally.
Section~\ref{sec:decomposition} develops the dimensional decomposition.
Section~\ref{sec:compact_formulation} presents the MILP formulation.
Section~\ref{sec:column_generation} describes the column generation scheme.
Section~\ref{sec:bfd} presents the BFD heuristic.
Section~\ref{sec:experiments} reports the computational experiments.
Section~\ref{sec:conclusions} concludes the paper presenting some final remarks and future work directions.
 
%==============================================================================
\section{Related Works}\label{sec:related_works}
%==============================================================================

The bin dimensioning problem belongs to the family of \emph{assortment problems} and is classified by \citet{wascher2007} as a multiple bin-size bin packing problem. For the two-dimensional case, \citet{beasley1985algorithm} introduced an integer programming model based on a cutting-pattern formulation that remains standard. Subsequent studies developed simulation, genetic, and heuristic approaches for the glass, sheet-metal, and caravan-manufacturing industries \citep{chambers1976cutting,gemmill1990approximate,gemmill1991comparison,gemmill1992solution,agrawal1993determining,holthaus2002methodology,holthaus2003best,arbib2007optimization,arbib2009exact,dowsland2007simulated}.

The closest antecedent is the shipper-sizing problem of \citet{alonso2016determining}, who extend Beasley’s model to a distribution center serving retail outlets and derive lower bounds from multiknapsack, $p$-median, and facility-location relaxations. Their experiments use 20-order samples drawn from historical demand distributions and three to seven product types, with identically dimensioned products aggregated, and evaluate the selected shipper sets on 200-order samples.

The bin packing problem and its variants have a vast literature~\citep{coffman1996approximation,delorme2016bin}. Classical results
for the one-dimensional case include the asymptotic analyses of
First-Fit-Decreasing and Best-Fit-Decreasing
heuristics~\citep{johnson1973near}. 
The variable-sized bin-packing problem, in which bins of different capacities are available, is another name for the same problem class. It was introduced by~\citet{friesen1986variable} and later studied using branch-and-price methods by~\citet{correia2008solving}.
Bin packing with conflicts, where certain items
may not share a bin, has been addressed by~\citet{jansen1999approximation}
and~\citet{epstein2008bin}; the SKU-diversity limit in our problem, which
bounds the number of \emph{distinct} SKUs allowed per bin, can be viewed as a
cardinality-based generalization of such conflict constraints. In higher
dimensions, \citet{martello2000three} proposed the first exact algorithm for
three-dimensional bin packing, and~\citet{pisinger2007using} developed
decomposition-based approaches. Column generation, introduced for the cutting stock problem by
\citet{gilmore1961linear,gilmore1963linear}, whose pattern-based formulation
was independently anticipated by \citet{kantorovich1960mathematical}, as
documented by \citet{uchoa2026kantorovich}, has become a standard tool for
obtaining tight LP bounds in bin packing; see~\citet{vance1998branch}
and~\citet{vanderbeck1999computational} for branch-and-price algorithms,
and~\citet{pessoa2010exact} and~\citet{benamor2006dual} for stabilization
techniques.

%Three features distinguish our problem from that of \citet{alonso2016determining}. First, they select at most four freely dimensioned sizes from over 10{,}000 candidates for a small number of product types, whereas our catalog is predetermined by the company’s manufacturing infrastructure, with $|T| \in \{24,30\}$ and up to $1.5 \times 10^{6}$ SKUs to assign. Second, they sample demand from a probability distribution of future orders, whereas we use a deterministic peak-inventory snapshot. Third, our side-by-side stacking rule enables the exact one-dimensional reduction in Section~\ref{sec:decomposition}, making the pricing subproblem a cardinality-constrained selection over scalar widths; in their setting, it would be a two-dimensional knapsack problem, an approach they consider and reject.

%Rather than benchmarking against earlier assortment-problem studies, \citet{alonso2016determining} evaluate their heuristics using problem-specific lower bounds; we follow the same convention. Their utilization figures are also incommensurable with ours for the reasons discussed in Section~\ref{subsec:fullscale}.

The idea of reducing a multi-dimensional packing problem to a
lower-dimensional one by exploiting structural restrictions on how items may
be arranged recurs throughout the cutting and packing literature. The classical
instance is level- (or shelf-) packing, in which rectangles are forced into
horizontal levels so that a two-dimensional problem decomposes into a sequence
of one-dimensional subproblems~\citep{coffman1980performance,baker1983shelf}.
There, however, the level structure is a self-imposed simplification that
sacrifices optimality. Relaxation- and bound-oriented reductions of the same
flavour also appear: the lower bounds of \citet{martello2000three} and the
decomposition of \citet{pisinger2007using} collapse the geometry to fewer
dimensions to obtain tractable relaxations rather than exact reformulations.

Reductions motivated by operational requirements rather than modeling convenience also appear in the literature. In \citet{alonso2016determining}, a \emph{this-side-up} requirement, whereby product barcodes must remain readable when the lid is opened, fixes one item dimension vertically and thereby yields an exact reduction from three to two dimensions. Methodologically, the work most closely related to our reduction is that of \citet{delorme2024exact}, who decompose a three-dimensional container loading problem with stacking and multi-drop accessibility constraints by first generating item columns and then solving a two-dimensional knapsack problem, thus explicitly bridging two-dimensional packing theory and practical container loading.
Our reduction differs in that the
stacking constraint applies to every pair of distinct SKUs, so the collapse
from three dimensions to one is exact and global---a reformulation of the
original problem rather than a relaxation, a bound, or a heuristic
restriction.

A recurring feature of this literature is the scale at which each method
operates, and a fair comparison must pair instance size with the type of
solution guarantee obtained. Exact methods for two- and three-dimensional bin
packing solve to optimality instances of up to roughly one hundred items---90
items in the three-dimensional branch-and-bound of \citet{martello2000three}
and 100 rectangles in the two-dimensional branch-and-price of
\citet{pisinger2007using}---owing to the cost of enforcing geometric
non-overlap explicitly. One-dimensional variants scale substantially further:
branch-and-price and pseudo-polynomial formulations routinely solve instances
with up to about one thousand items
\citep{correia2008solving,delorme2016bin}. The problem we address originates in
three dimensions but, through the dimensional decomposition proposed in
Section~\ref{sec:decomposition}, reduces to a one-dimensional capacity problem.
This places our column generation at a scale comparable to the one-dimensional
state of the art---certifying LP bounds for instances with up to roughly
2{,}000 SKUs---while the Best-Fit-Decreasing heuristic produces feasible
solutions for instances with up to 1.7~million blocks, several orders of
magnitude beyond the sizes reported for exact geometric packing.

Within warehouse logistics, the \emph{slot profile design problem}---determining
slot heights in pallet racks to maximize space utilization---was studied
by~\citet{cardona2018determine} and extended to full rack layout
by~\citet{cardona2020layouts}. These works consider only one dimension (height)
for single-SKU pallet storage, a setting far simpler than ours. The bin sizing
problem for shipping cartons has been addressed by~\citet{singh2020carton}
and~\citet{chen2023hybrid}; in those problems, however, the item groupings
(customer orders) are given in advance, whereas in our problem the assignment of
SKUs to bins is itself a decision variable.

%==============================================================================
\section{Problem Definition}
\label{sec:problem_definition}
%==============================================================================

We begin by introducing the fundamental concepts needed to formally characterize the problem.

\begin{definition}[SKU]
A SKU (Stock Keeping Unit) is a unique code identifying a specific product together with all its attributes (size, color, weight, etc.). All items sharing the same SKU are physically identical in their dimensions.
\end{definition}

\begin{definition}[Item]
An item is a single physical unit of a product. Each item belongs to exactly one SKU and has the dimensions associated with that SKU.
\end{definition}

\begin{definition}[Bin Type]
A bin type $t \in T$ is characterized by three dimensions: length $L_t$, width $W_t$, and height $H_t$. The set $T$ of available types is fixed in advance according to manufacturing and infrastructure constraints.
\end{definition}

The input data of the problem consist of:
\begin{itemize}
    \item $S$: set of SKUs;
    \item $d_s$: number of items of SKU $s \in S$;
    \item $(l_s, w_s, h_s)$: dimensions (length, width, height) of the items of SKU $s$;
    \item $T$: set of available bin types;
    \item $(L_t, W_t, H_t)$: dimensions of bin type $t \in T$;
    \item $V_t$: volume of bin type $t \in T$, calculated by the product $L_t W_t H_t$;
    \item $m$: maximum number of distinct SKUs allowed in a single bin;
    \item $n_s$: maximum number of items of SKU $s$ allowed in a single bin;
    \item $J$: a set of candidate bins, where for each bin $j\in J$ one must decide whether it is used and, if so, its type $t_j$.
\end{itemize}

The goal is to simultaneously select a type for each required bin $j\in J$ and assign items to bins so as to minimize the total volume of bins used, subject to all operational constraints.

The problem is governed by three operational constraints, each motivated by practical requirements of warehouse storage operations.

\paragraph{SKU Limit per Bin.}
Each bin may contain items of at most $m$ distinct SKUs. This constraint is driven by picking productivity: excessive product diversity within a bin increases the cognitive load on warehouse operators, lengthens visual search time, and raises the probability of retrieval errors.

\paragraph{Quantity Limit per SKU.}
For each SKU $s$, a bin may hold at most $n_s$ units. This parameter captures the replenishment profile of the SKU, namely the typical number of units received per restocking cycle. Sizing bins to match this quantity minimizes the number of bin accesses required during put-away operations.

\paragraph{SKU Stacking Constraint.}
Items of different SKUs may not be stacked vertically (Z-axis) nor placed one behind the other in depth (Y-axis). This ensures that every SKU stored in a bin is directly accessible from its front opening. Items of different SKUs may only be arranged side by side along the X-axis (bin length). Items of the same SKU, being physically identical, may be organized freely in any configuration. Figure~\ref{fig:valido_invalido} illustrates the three possible relative positions between two blocks of distinct SKUs and identifies the only permitted one.

\begin{figure}[htbp]
    \centering
    \includegraphics[width=\textwidth, trim={0.6cm 0.5cm 0.6cm 1.1cm},
    clip]{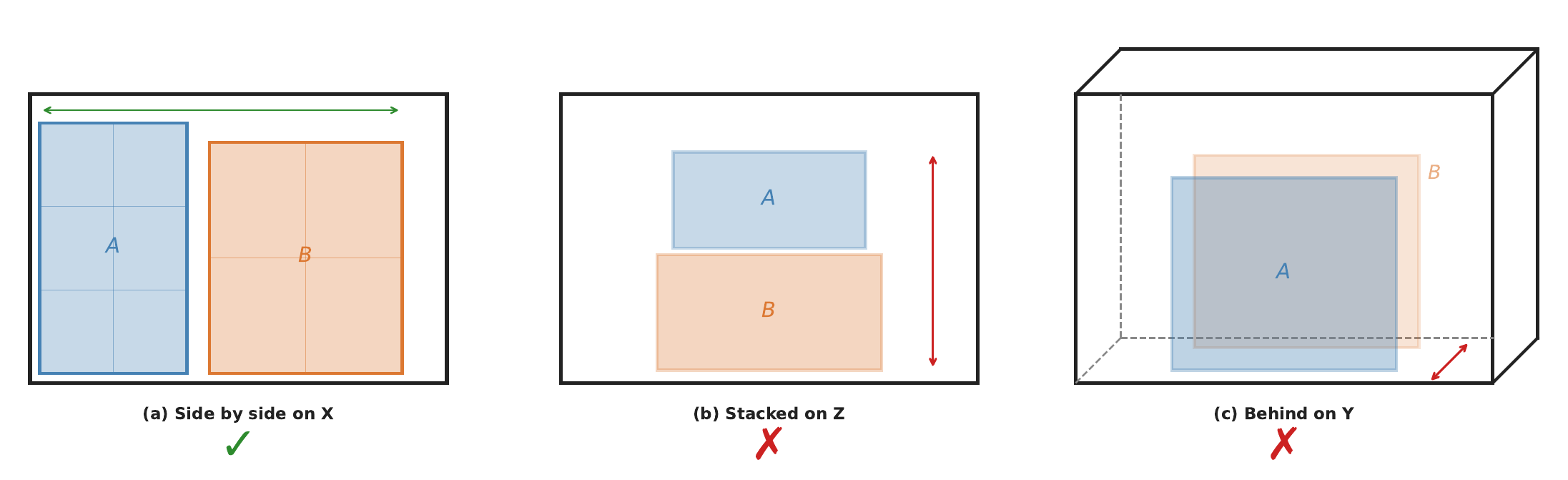}
    \caption{Valid and invalid relative positions for blocks of distinct SKUs. Only side-by-side placement along the X-axis~(a) is permitted; vertical stacking along Z-axis~(b) and depth-wise positioning along Y-axis~(c) are prohibited.}
    \label{fig:valido_invalido}
\end{figure}

As a consequence of these three constraints, every feasible solution consists of bins in which each present SKU occupies a well-defined lateral strip along the X-axis. Figure~\ref{fig:layout_bin} shows a representative example: three blocks of distinct SKUs (A, B, C) are arranged side by side along the X-axis, occupying widths $X^*_{A,t}$, $X^*_{B,t}$, and $X^*_{C,t}$ respectively, whose sum does not exceed $L_t$. The computation of these optimal widths is developed in Section~\ref{sec:decomposition}.

\begin{figure}[htbp]
    \centering
    \includegraphics[width=0.75\textwidth, trim={3.5cm 4.3cm 3.05cm 4.8cm},
    clip]{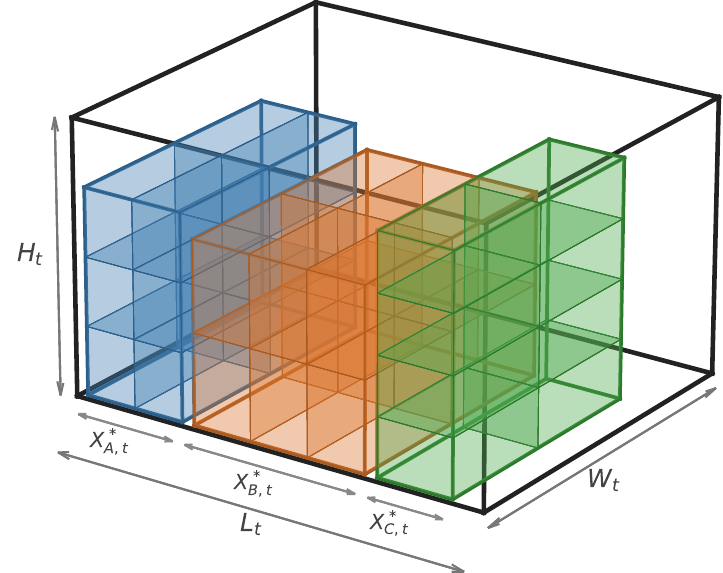}
    \caption{A bin of type $t$ with three blocks of distinct SKUs positioned side by side along the X-axis. The optimal widths $X^*_{A,t}$, $X^*_{B,t}$, $X^*_{C,t}$ satisfy $X^*_{A,t} + X^*_{B,t} + X^*_{C,t} \leq L_t$.}
    \label{fig:layout_bin}
\end{figure}

% remark
Let $V = \sum_{s \in S} d_s\, l_s w_s h_s$ denote the total volume of all items to be stored. This quantity is constant for a given instance, since every item must be stored regardless of the solution. The total bin volume used---the objective being minimized---is precisely the denominator of the average volumetric utilization $\eta = V / \sum_{j} V_{t_j}$, where $V_{t_j}$ is the volume of the bin type assigned to bin $j$. Bins in $J$ that are not used in a solution are assigned zero volume ($V_{t_j}=0$), so the sum ranges over all of $J$ without needing to distinguish used from unused bins at this point. Since $V$ is constant, minimizing the total bin volume is therefore equivalent to maximizing $\eta$.

%==============================================================================
\section{Dimensional Decomposition}
\label{sec:decomposition}
%==============================================================================

This section develops the dimensional decomposition that transforms the original three-dimensional item packing problem into a one-dimensional block positioning problem. The decomposition proceeds in three steps, reducing the problem from positioning individual items in 3D, to positioning 1D blocks.

\subsection{Item Aggregation into Blocks}
\label{subsec:aggregation}

The first reduction follows from observing that items of the same SKU are physically identical and, by the stacking constraint, are the only ones that may be stacked vertically or placed one behind the other in depth. Consequently, all items of a given SKU assigned to the same bin can be treated as a single geometric entity: a block.

\begin{definition}[Block]
A block $(s,k)$ is a group of items of SKU $s$ to be assigned together to a single bin. The index $k \in \{1, \ldots, K_s\}$ distinguishes the different blocks of the same SKU.
\end{definition}

Since each bin may contain at most $n_s$ items of SKU $s$, and there are $d_s$ items of that SKU in total, the minimum number of blocks required is:
\begin{equation}
    K_s = \left\lceil \frac{d_s}{n_s} \right\rceil
    \label{eq:num_bloques}
\end{equation}
When the desired $n_s$ exceeds the maximum number of items of SKU $s$ that fit in any bin of the catalog, $n_s$ is reduced to that maximum, so that every block admits at least one compatible bin type.

\begin{definition}[Block size]
The number of items in block $(s,k)$ is:
\begin{equation}
    q_{sk} = \begin{cases}
        n_s & \text{if } k < K_s \\
        d_s - (K_s - 1) \cdot n_s & \text{if } k = K_s
    \end{cases}
    \label{eq:cantidad_bloque}
\end{equation}
\end{definition}

The first $K_s - 1$ blocks contain exactly $n_s$ items (the maximum allowed), while the last block contains the remaining items.

\begin{definition}[Block set]
The complete set of blocks is:
\begin{equation}
    \mathcal{B} = \{(s, k) : s \in S, \; k \in \{1, \ldots, K_s\}\}
    \label{eq:conjunto_bloques}
\end{equation}
\end{definition}

\begin{proposition}[Reduction bound]
The number of blocks satisfies:
\begin{equation}
    |S| \leq |\mathcal{B}| \leq \sum_{s\in S} d_s
\end{equation}
\end{proposition}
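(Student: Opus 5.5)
Since $\mathcal{B}$ is by definition a disjoint union over $s\in S$ of the index sets $\{(s,1),\dots,(s,K_s)\}$, we have $|\mathcal{B}| = \sum_{s\in S} K_s$ with $K_s=\lceil d_s/n_s\rceil$ as in \eqref{eq:num_bloques}. The plan is therefore to bound each $K_s$ separately, showing $1 \le K_s \le d_s$ for every SKU, and then sum over $s\in S$ to obtain both inequalities of the proposition at once.

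For the lower bound, the only facts needed are $d_s\ge 1$ and $n_s \ge 1$. Every SKU in $S$ is part of the inventory to be stored, so it has at least one item. After the adjustment described just before the block-size definition, $n_s$ is the minimum of the desired quantity limit and the maximum number of items of SKU $s$ fitting in some catalog bin. Both of these are at least $1$, assuming each item fits in at least one bin type, which is implicit in the feasibility of the problem. Hence $d_s/n_s>0$, and so $K_s=\lceil d_s/n_s\rceil\ge 1$. Summing gives $|\mathcal{B}|\ge |S|$.

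For the upper bound, $n_s\ge 1$ gives $d_s/n_s\le d_s$. Since $d_s$ is an integer and the ceiling is monotone, $K_s=\lceil d_s/n_s\rceil \le \lceil d_s\rceil = d_s$. Summing gives $|\mathcal{B}|\le \sum_{s\in S} d_s$. I would also remark that equality holds on the left exactly when $d_s\le n_s$ for all $s$, and on the right exactly when $n_s=1$ for every $s$ with $d_s\ge 2$. Adding this remark shows the bounds are tight.

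The only delicate point is making explicit the standing assumptions $d_s\ge 1$ and $n_s\ge 1$, particularly that $n_s$ remains positive after the reduction to the catalog maximum. I would state these as implicit in the problem data, namely that every SKU has stock and every item fits in some bin type. Beyond that, the argument is a routine ceiling estimate.
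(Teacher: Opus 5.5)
Your proposal is correct and matches the paper's reasoning: the paper gives no formal proof, only the remark that the bounds are attained when $d_s \le n_s$ for all $s$ and when $n_s = 1$ for all $s$, which presupposes exactly your per-SKU estimate $1 \le K_s = \lceil d_s/n_s \rceil \le d_s$ summed over $S$. Your explicit standing assumptions $d_s \ge 1$ and $n_s \ge 1$ are appropriate. Your equality condition on the right, namely $n_s = 1$ only for SKUs with $d_s \ge 2$, is a slightly sharper (necessary and sufficient) version of the paper's sufficient condition.
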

Note that $|S| \leq |\mathcal{B}| \leq \sum_{s\in S} d_s$: the lower bound is attained when every SKU fits in a single block ($d_s\leq n_s$ for all $s$), and the upper bound when $n_s = 1$ for all $s$.

\subsection{Reduction to One-Dimensional Positioning}
\label{subsec:1d_reduction}

The second reduction follows from the combined effect of the stacking constraint and the quantity limit per SKU. Consider two blocks $(s,k)$ and $(s',k')$ of distinct SKUs ($s \neq s'$) assigned to the same bin. By the stacking constraint, only their separation along the X-axis is relevant for non-overlap.

This fact has three important implications for the formulation. First, Y-coordinates are irrelevant for non-overlap, so all blocks can be placed at $y = 0$ (flush with the front of the bin). Second, Z-coordinates are likewise irrelevant, so all blocks rest on the bin floor ($z = 0$). Third, it is sufficient to determine only the X-coordinate of each block and verify that no two blocks overlap along the X-axis.

Although positioning reduces to one dimension, all three block dimensions remain relevant for bin compatibility. Along X, the sum of the X-dimensions of all blocks in a bin must not exceed the bin length $L_t$. Along Y, the Y-dimension of each block must not exceed the bin width $W_t$. Along Z, the height of each block must not exceed the bin height $H_t$.

\subsection{Bounding Box and Block Configurations}
\label{subsec:bounding_box}

Having established that the problem reduces to one-dimensional block positioning, we now characterize the geometric configurations that a block may adopt. A block $(s,k)$ contains $q_{sk}$ identical items of dimensions $(l_s, w_s, h_s)$.

It is important to distinguish between the \emph{occupied region} of the items and the \emph{bounding box} that contains them. Items within a block need not form a solid rectangular prism; they may be arranged with internal gaps, provided they fit within a bounding box that is compatible with the assigned bin. For the purposes of X-axis positioning and bin compatibility, only the bounding box matters.

\begin{definition}[Block bounding box]
The bounding box of a block is the smallest rectangular prism enclosing all its items. A bounding box of dimensions $a \times b \times c$ (in units of items) has capacity for $a \cdot b \cdot c$ positions, of which $q_{sk}$ are occupied by items.
\end{definition}

\begin{definition}[Block configuration]
A configuration $(a, b, c)$ for a block with $q$ items is a triple of positive integers defining a bounding box such that:
\begin{equation}
    a \cdot b \cdot c \geq q
    \label{eq:config_constraint}
\end{equation}
where $a$, $b$, and $c$ denote the number of positions along the X, Y, and Z axes, respectively.
\end{definition}

% remark
The inequality $a \cdot b \cdot c \geq q$ reflects that the bounding box may contain empty positions.

The physical dimensions of the bounding box depend on the \emph{orientation} of the block, i.e., on which item dimension is aligned with each bin axis. An orientation is a triple $(\delta_X, \delta_Y, \delta_Z)$ that assigns one of the item dimensions $(l_s, w_s, h_s)$ to each axis. The set of admissible orientations, denoted by~$\mathcal{O}$, contains the two XY-rotations $\{(l_s, w_s, h_s),\, (w_s, l_s, h_s)\}$ for non-rotatable items, or all six permutations of $(l_s, w_s, h_s)$ for rotatable items. Under orientation $(\delta_X, \delta_Y, \delta_Z)$ and configuration $(a,b,c)$, the physical dimensions of the bounding box are:
\begin{equation}
    \text{X-dimension: } a \cdot \delta_X, \qquad
    \text{Y-dimension: } b \cdot \delta_Y, \qquad
    \text{Z-dimension: } c \cdot \delta_Z
    \label{eq:dims_config}
\end{equation}

Figure~\ref{fig:bounding_box} illustrates an example with $q = 7$ items in a $2 \times 2 \times 2 = 8$-position bounding box, leaving one position empty.

\begin{figure}[htbp]
    \centering
    \includegraphics[width=0.65\textwidth, trim={5.5cm 4.7cm 3.7cm 6.3cm}, clip]{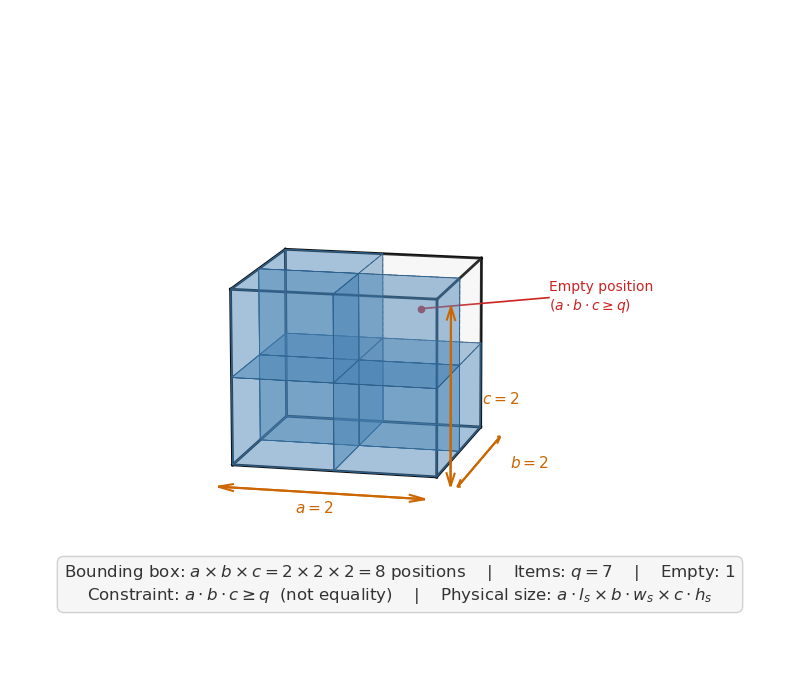}
    \caption{Bounding box of dimensions $a \times b \times c = 2 \times 2 \times 2$ for a block with $q = 7$ items. The empty position reflects the inequality $a \cdot b \cdot c \geq q$. The physical size of the bounding box is $a \cdot l_s \times b \cdot w_s \times c \cdot h_s$.}
    \label{fig:bounding_box}
\end{figure}

\begin{definition}[Extended configuration]
An extended configuration $(a, b, c, \delta_X, \delta_Y, \delta_Z)$ specifies both the bounding box $(a,b,c)$ and the orientation $(\delta_X, \delta_Y, \delta_Z) \in \mathcal{O}$.
\end{definition}

\begin{definition}[Set of valid configurations]
For a block $(s,k)$ with $q_{sk}$ items and a bin type $t$, the set of valid configurations is:
\begin{equation}
    \mathcal{C}_{sk,t} = \left\{(a,b,c,\delta_X,\delta_Y,\delta_Z) \in \mathbb{Z}^3_+
    \times \mathcal{O} :
    \begin{array}{l}
        a \cdot b \cdot c \geq q_{sk} \\[0.3em]
        a \cdot \delta_X \leq L_t \\[0.3em]
        b \cdot \delta_Y \leq W_t \\[0.3em]
        c \cdot \delta_Z \leq H_t
    \end{array}
    \right\}
    \label{eq:configs_validas}
\end{equation}
\end{definition}

\subsection{Optimal Configuration}
\label{subsec:optimal_config}

A key simplification arises from the following observation: given the structure of the problem, the optimal configuration of each block is uniquely determined by the bin type to which it is assigned.

By the preceding reductions, packing amounts to positioning blocks along the X-axis only. The Y- and Z-dimensions of each block need only satisfy bin compatibility constraints (fitting within the bin width and height); they play no role in the non-overlap constraints between blocks.

\begin{proposition}[Optimality of minimizing the X-dimension]
\label{prop:min_x}
Let $(s,k)$ be a block assigned to a bin of type $t$. Among all valid configurations in $\mathcal{C}_{sk,t}$, the optimal one is that which minimizes the X-dimension of the block.
\end{proposition}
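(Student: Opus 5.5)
The argument is an exchange argument: any feasible solution can be modified block by block, replacing each block's configuration by one of minimum X-dimension in $\mathcal{C}_{sk,t}$. We show that this never breaks feasibility and never changes the objective. The first step is to record how a configuration enters the problem once a block $(s,k)$ is assigned to a bin $j$ of type $t$. By Section~\ref{subsec:1d_reduction}, it enters in exactly two ways. First, the Y- and Z-extents $b\,\delta_Y$ and $c\,\delta_Z$ appear only in the compatibility conditions $b\,\delta_Y\le W_t$ and $c\,\delta_Z\le H_t$. These are already part of the definition of $\mathcal{C}_{sk,t}$, so they hold for every element of that set. Second, the X-extent $a\,\delta_X$ appears in the non-overlap condition along the X-axis. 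Since all blocks sit at $y=z=0$ and are ordered along X, that condition is equivalent to the capacity inequality $\sum_{(s',k')\in j} a_{s'k'}\,\delta_{X,s'k'} \le L_t$. The objective $\sum_j V_{t_j}$ depends only on which bins are used and their types, not on any configuration.

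Next, fix a feasible solution and a block $(s,k)$ in a bin $j$ of type $t$. Let $X^*_{sk,t}=\min\{a\,\delta_X : (a,b,c,\delta_X,\delta_Y,\delta_Z)\in\mathcal{C}_{sk,t}\}$. This minimum exists because $\mathcal{C}_{sk,t}$ is finite: $a\le \lfloor L_t/\delta_X\rfloor$, and similarly for $b$ and $c$, while $\mathcal{O}$ is finite. It is nonempty because the current configuration belongs to it. Replace the block's configuration by a minimizer. The block's X-extent weakly decreases, so the left-hand side of the capacity inequality for bin $j$ weakly decreases and it still holds. The Y/Z compatibility holds because the minimizer lies in $\mathcal{C}_{sk,t}$. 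The SKU-count and quantity limits are untouched, since no item changes bin. The positions can then be re-laid out contiguously from $x=0$ in the same order. Nothing else changes, so the objective value is unchanged.

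Repeating the replacement over all blocks, in any order, yields a solution of equal cost in which every block uses an X-minimal configuration. Applied to an optimal solution, this shows that some optimal solution uses, for each block, a configuration minimizing the X-dimension over $\mathcal{C}_{sk,t}$, which is the claim. Moreover, the slack in each bin is maximal under this choice. Any other configuration can only shrink the set of block combinations that fit in bin $j$, so the minimizer is optimal in the dominance sense too.

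The main obstacle is the reduction claim invoked in the first step: that non-overlap between blocks of distinct SKUs is fully captured by the one-dimensional sum inequality, so that Y and Z extents never interact across blocks. This is where the stacking constraint is used. If two distinct-SKU blocks overlapped in X-projection, one would have to be behind or on top of the other, which is forbidden. Hence the X-intervals of the blocks in a bin are pairwise disjoint subintervals of $[0,L_t]$, which gives the sum condition, and the converse holds by placing the blocks consecutively. I would state this equivalence explicitly before the exchange step. A secondary point is that the minimizer need not be unique. This is handled by noting that any minimizer is equally good, since Y/Z extents matter only through membership in $\mathcal{C}_{sk,t}$.
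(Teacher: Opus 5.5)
Your proposal is correct and follows the paper's reasoning. The paper justifies the proposition by noting, via Section~\ref{subsec:1d_reduction}, that a block's Y- and Z-extents enter only through the compatibility conditions $b\,\delta_Y \le W_t$ and $c\,\delta_Z \le H_t$ already built into $\mathcal{C}_{sk,t}$, while only the X-extent enters the non-overlap/capacity condition; your exchange argument makes this rigorous, including finiteness of $\mathcal{C}_{sk,t}$, preservation of feasibility and cost, and the explicit equivalence between distinct-SKU non-overlap and the one-dimensional sum inequality.
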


The following proposition establishes that this minimum X-dimension can be computed efficiently without enumerating all candidate configurations.

\begin{proposition}[Structure of the optimal configuration]
\label{prop:estructura_optima}
For a block $(s,k)$ with $q_{sk}$ items of SKU $s$ (dimensions $l_s, w_s, h_s$) and a bin of type $t$ (dimensions $L_t, W_t, H_t$), the minimum X-dimension is achieved by maximizing the bounding box capacity in the Y and Z directions.
\end{proposition}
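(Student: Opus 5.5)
Fix an orientation $(\delta_X,\delta_Y,\delta_Z)\in\mathcal{O}$ and treat each orientation separately; at the end take the minimum over the (at most six) orientations. With the orientation fixed, the constraints of $\mathcal{C}_{sk,t}$ decouple by axis. They read $b\le B:=\lfloor W_t/\delta_Y\rfloor$, $c\le C:=\lfloor H_t/\delta_Z\rfloor$, $a\le A:=\lfloor L_t/\delta_X\rfloor$, and $abc\ge q_{sk}$. The X-dimension is $a\,\delta_X$, which is increasing in $a$. So minimizing the X-dimension amounts to choosing the smallest positive integer $a$ for which some admissible pair $(b,c)$ satisfies $abc\ge q_{sk}$.

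The key step is a monotonicity argument. For fixed $a$, feasibility requires $bc\ge q_{sk}/a$. Since $b\le B$ and $c\le C$, the product $bc$ is maximized at $(b,c)=(B,C)$. Hence a given $a$ is feasible if and only if $aBC\ge q_{sk}$, i.e. $a\ge \lceil q_{sk}/(BC)\rceil$. Concretely, if some $(a,b,c)$ is valid, then $(a,B,C)$ is valid as well, because $aBC\ge abc\ge q_{sk}$ and $B,C$ respect the Y and Z limits. Replacing $(b,c)$ by $(B,C)$ never forces a larger $a$.

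From this the minimum is
\[
a^*=\left\lceil \frac{q_{sk}}{BC}\right\rceil ,
\qquad\text{with X-dimension } a^*\delta_X ,
\]
and the configuration is valid for this orientation iff $B,C\ge 1$ and $a^*\le A$. This is exactly the statement that the minimum X-dimension is attained by maximizing the bounding-box capacity $bc$ in the Y and Z directions. Taking the minimum of $a^*\delta_X$ over admissible orientations gives $X^*_{sk,t}$ in $O(1)$, since $|\mathcal{O}|\le 6$. Combined with Proposition~\ref{prop:min_x}, this yields the optimal configuration.

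The main obstacle is not the inequality itself but stating correctly what "maximizing capacity in Y and Z" means once orientations vary. The argument needs the integrality and box constraints to separate per axis, which holds because the orientation is fixed first. One must also handle the degenerate cases $B=0$ or $C=0$, where the orientation is infeasible, and the case $a^*>A$, where the bin type is incompatible. The claim should therefore be stated per orientation, with the outer minimum over $\mathcal{O}$ taken last. Maximizing $bc$ across different orientations does not by itself minimize $a\delta_X$, because $\delta_X$ also changes with the orientation.
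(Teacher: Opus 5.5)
Your proposal is correct and takes essentially the same route as the paper's proof. Both fix the orientation, bound the slice capacity $bc$ by $\lfloor W_t/\delta_Y\rfloor\lfloor H_t/\delta_Z\rfloor$ to get the minimal slice count $\lceil q_{sk}/(BC)\rceil$, and then minimize $a\,\delta_X$ over $\mathcal{O}$; your explicit treatment of $B=0$, $C=0$ and $a^*>A$ is a small addition that the paper makes only in Algorithm~\ref{alg:bbox}, not in the proof itself.
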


\begin{proof}
Fix an orientation with effective item dimensions $(\delta_X, \delta_Y, \delta_Z)$, where $(\delta_X, \delta_Y, \delta_Z)$ is one of the admissible permutations of $(l_s, w_s, h_s)$. Let $b_{\max} = \lfloor W_t / \delta_Y \rfloor$ and $c_{\max} = \lfloor H_t / \delta_Z \rfloor$ be the maximum number of items that fit along the Y- and Z-axes respectively. Each slice perpendicular to the X-axis has capacity $b \cdot c \leq b_{\max} \cdot c_{\max}$ positions, so the minimum number of slices required to contain $q_{sk}$ items is $a_{\min} = \lceil q_{sk} / (b_{\max} \cdot c_{\max}) \rceil$, which is attained by setting $b = b_{\max}$ and $c = c_{\max}$. The X-dimension under this orientation is $a_{\min} \cdot \delta_X$. The optimal X-dimension is then the minimum over all admissible orientations.
\end{proof}

This result enables the optimal X-dimension to be computed in $O(1)$ time. Concretely, the optimal X-dimension $X^*_{sk,t}$ is obtained by evaluating each admissible orientation: for each, the maximum slice capacity $b_{\max} \cdot c_{\max}$ is computed from $b_{\max} = \lfloor W_t / \delta_Y \rfloor$ and $c_{\max} = \lfloor H_t / \delta_Z \rfloor$ (where $(\delta_X, \delta_Y, \delta_Z)$ are the effective item dimensions under the given orientation), and the minimum number of X-slices is $a_{\min} = \lceil q_{sk} / (b_{\max} \cdot c_{\max}) \rceil$. The optimal X-dimension is then:
\begin{equation}
    X^*_{sk,t} = \min_{(\delta_X, \delta_Y, \delta_Z) \in \mathcal{O}} \;
    a_{\min} \cdot \delta_X,
    \label{eq:X_opt}
\end{equation}
where $\mathcal{O}$ is the set of admissible orientations defined in Section~\ref{subsec:bounding_box}. The argument of Propositions~\ref{prop:min_x} and~\ref{prop:estructura_optima} applies independently to each orientation in $\mathcal{O}$; the minimum over all orientations yields the globally optimal X-dimension. We set $X^*_{sk,t} = +\infty$ when no feasible orientation exists, i.e., block $(s,k)$ is incompatible with type~$t$. We denote by $T_{sk} = \{t \in T : 
X^*_{sk,t} < +\infty\}$ the set of bin types compatible with block $(s,k)$. Algorithm~\ref{alg:bbox} formalizes the complete procedure.

\begin{algorithm}[H]
\caption{Optimal Bounding Box Computation}
\label{alg:bbox}
\begin{algorithmic}[1]
\REQUIRE Block with $q$ items of dimensions $(l_s, w_s, h_s)$, bin of type $t$ with dimensions $(L_t, W_t, H_t)$, rotatability flag
\ENSURE Optimal X-dimension $X^*$, or $\infty$ if infeasible
%\STATE $X^* \leftarrow \infty$
%\FOR{each orientation $(\delta_X, \delta_Y, \delta_Z) \in \text{orientations}(l_s, w_s, h_s, \text{rotatable})$}
\STATE $X^* \leftarrow \infty$
\STATE $\mathcal{O} \leftarrow$ six permutations of $(l_s,w_s,h_s)$ if rotatable, else $\{(l_s,w_s,h_s),\,(w_s,l_s,h_s)\}$
\FOR{each orientation $(\delta_X, \delta_Y, \delta_Z) \in \mathcal{O}$}
    \STATE $a_{\max} \leftarrow \lfloor L_t / \delta_X \rfloor$, \quad $b_{\max} \leftarrow \lfloor W_t / \delta_Y \rfloor$, \quad $c_{\max} \leftarrow \lfloor H_t / \delta_Z \rfloor$
    \IF{$a_{\max} \geq 1$ \AND $b_{\max} \geq 1$ \AND $c_{\max} \geq 1$}
        \STATE $a_{\min} \leftarrow \lceil q / (b_{\max} \cdot c_{\max}) \rceil$
        \IF{$a_{\min} \leq a_{\max}$}
            \STATE $X^* \leftarrow \min(X^*, \; a_{\min} \cdot \delta_X)$
        \ENDIF
    \ENDIF
\ENDFOR
\RETURN $X^*$
\end{algorithmic}
\end{algorithm}

The orientation set contains all six permutations of $(l_s,w_s,h_s)$ for fully rotatable items. For non-rotatable items, the height dimension remains aligned with the Z-axis and only the two horizontal rotations $(l_s,w_s,h_s)$ and $(w_s,l_s,h_s)$ are allowed. A return value of $X^* = \infty$ indicates that no orientation allows the $q$ items to fit within the bin. The computation can be accelerated by deduplication: blocks sharing the same tuple $(q,\, l_s,\, w_s,\, h_s,\, \text{rotatable})$ yield identical results for every bin type. Computing $X^*_{sk,t}$ requires $O(1)$ arithmetic operations per (block, type) pair; for the full problem, preprocessing all parameters $X^*_{sk,t}$ requires $O(|\mathcal{B}| \cdot |T|)$ operations in total.

%==============================================================================
\section{MILP Formulation}
\label{sec:compact_formulation}
%==============================================================================

This section presents a compact MILP formulation for the bin dimensioning problem, built on the one-dimensional block representation developed in Section~\ref{sec:decomposition}. The  sets, parameters, and decision variables of the model are described next.

Table~\ref{tab:conjuntos} presents the fundamental sets used in the formulation. The set $S$ represents the SKUs of the problem, $\mathcal{B}$ denotes the set of blocks, $J$ indexes the bins sequentially, and $T$ represents the available bin types.

\begin{table}[H]
\centering
\caption{Model sets.}
\label{tab:conjuntos}
\begin{tabular}{cl}
\toprule
\textbf{Set} & \textbf{Description} \\
\midrule
$S$ & SKUs \\
$\mathcal{B}$ & Blocks: $\mathcal{B} = \{(s, k) : s \in S,\; k \in \{1, \ldots, K_s\}\}$ \\
$J$ & Bins (indexed sequentially) \\
$T$ & Available bin types \\
\bottomrule
\end{tabular}
\end{table}

Table~\ref{tab:parametros} details the remaining model parameters. For each SKU $s$, the item dimensions $(l_s, w_s, h_s)$ are known. Each block $(s,k)$ contains $q_{sk}$ items, and bin types have dimensions $(L_t, W_t, H_t)$. The parameter $m$ specifies the maximum number of distinct SKUs allowed per bin, and $K_s = \lceil d_s / n_s \rceil$ denotes the number of blocks of SKU $s$. Finally, $X^*_{sk,t}$ is the optimal X-dimension of block $(s,k)$ when assigned to a bin type $t$, computed as described in Section~\ref{subsec:optimal_config}.

\begin{table}[H]
\centering
\caption{Model parameters.}
\label{tab:parametros}
\begin{tabular}{cl}
\toprule
\textbf{Parameter} & \textbf{Description} \\
\midrule
$l_s, w_s, h_s$ & Dimensions of the items of SKU $s$ \\
$q_{sk}$ & Number of items in block $(s,k)$ \\
$L_t, W_t, H_t$ & Dimensions of bin type $t$ \\
$m$ & Maximum number of distinct SKUs per bin \\
$K_s$ & Number of blocks of SKU $s$: $K_s = \lceil d_s / n_s \rceil$ \\
$X^*_{sk,t}$ & Optimal X-dimension of block $(s,k)$ assigned to type $t$ \\
\bottomrule
\end{tabular}
\end{table}

Table~\ref{tab:variables} presents the decision variables. The binary variable $p_{skj}$ indicates whether block $(s,k)$ is assigned to bin $j$; $u_j$ indicates whether bin $j$ is in use; $b_{jt}$ indicates whether bin $j$ is of type $t$; and $\omega_{sj}$ indicates whether SKU $s$ is present in bin $j$.

\begin{table}[H]
\centering
\caption{Decision variables.}
\label{tab:variables}
\begin{tabular}{ccl}
\toprule
\textbf{Variable} & \textbf{Domain} & \textbf{Description} \\
\midrule
$p_{skj}$ & $\{0,1\}$ & 1 if block $(s,k)$ is assigned to bin $j$ \\
$u_j$     & $\{0,1\}$ & 1 if bin $j$ is in use \\
$b_{jt}$  & $\{0,1\}$ & 1 if bin $j$ is of type $t$ \\
$\omega_{sj}$ & $\{0,1\}$ & 1 if SKU $s$ is present in bin $j$ \\
\bottomrule
\end{tabular}
\end{table}

The formulation is as follows:
\begin{align}
    \min \quad & \sum_{j \in J} \sum_{t \in T} b_{jt} \cdot L_t W_t H_t
    \label{obj} \\[1em]
    % --
    \text{s.t.} \quad
    & \sum_{j \in J} p_{skj} = 1,
      && \forall (s,k) \in \mathcal{B},
      \label{c:asig_bloque} \\[0.5em]
        % --
    & p_{skj} \leq u_j,
      && \forall (s,k) \in \mathcal{B}, \; j \in J,
      \label{c:activ_bin} \\[0.5em]
          % --
    & \sum_{t \in T} b_{jt} = u_j,
      && \forall j \in J,
      \label{c:tipo_bin} \\[0.5em]
          % --
    &\omega_{sj} \geq \sum_{k=1}^{K_s} p_{skj},
    &&\forall s \in S, \; j \in J,
    \label{c:presencia_sku} \\[0.5em]
          % --
    & \sum_{s \in S} \omega_{sj} \leq m,
      && \forall j \in J,
      \label{c:max_skus} \\[0.5em]
          % --
    & \sum_{k=1}^{K_s} p_{skj} \leq 1,
      && \forall s \in S, \; j \in J,
      \label{c:unicidad_bloque} \\[0.5em]
          % --
    & \sum_{(s,k) \in \mathcal{B}} \sum_{t \in T}
        X^*_{sk,t} \cdot p_{skj} \cdot b_{jt}
      \leq \sum_{t \in T} L_t \cdot b_{jt},
      && \forall j \in J,
      \label{c:capacidad} \\[0.5em]
          % --
    & u_j \geq u_{j+1},
      && \forall j \in J \setminus \{|J|\},
      \label{c:simetria} \\[0.5em]
          % --
    & p_{skj},\, u_j,\, b_{jt},\, \omega_{sj} \in \{0,1\}.
      \label{c:binarias}
\end{align}
The objective function~\eqref{obj} minimizes the total volume of bins used. Since
$b_{jt} = 1$ only if bin $j$ is of type $t$, and each active bin is assigned
exactly one type by constraint~\eqref{c:tipo_bin}, the sum
$\sum_{t \in T} b_{jt} \cdot L_t W_t H_t$ equals the volume of bin $j$ when
it is in use, and zero otherwise. Constraints~\eqref{c:asig_bloque} are partition constraints ensuring that every
block is assigned to exactly one bin. Constraints~\eqref{c:activ_bin} link
block assignments to bin activation: if any block is placed in bin $j$, then
$u_j$ must equal one. Constraints~\eqref{c:tipo_bin} complete the bin
activation logic by requiring that each active bin be assigned exactly one type
($\sum_t b_{jt} = 1$ when $u_j = 1$) while leaving no type selected for
inactive bins ($\sum_t b_{jt} = 0$ when $u_j = 0$).

Constraints~\eqref{c:presencia_sku} define $\omega_{sj}$ as a presence
indicator: the variable is forced to one whenever any block of SKU $s$ is
assigned to bin $j$. Constraints~\eqref{c:max_skus} then use these indicators
to enforce the SKU diversity limit, requiring that the number of distinct SKUs
present in any bin not exceed $m$. Constraints~\eqref{c:unicidad_bloque}
guarantee that each bin holds at most one block per SKU, which follows directly
from the block construction: since each block already contains up to $n_s$
items---the maximum allowed per bin---two blocks of the same SKU cannot coexist
in the same bin.

Constraints~\eqref{c:capacidad} enforce X-axis capacity: the sum of the optimal
X-dimensions of all blocks assigned to bin $j$ must not exceed its length. The
product $p_{skj} \cdot b_{jt}$, which equals one if and only if block $(s,k)$
is assigned to bin $j$ and that bin is of type $t$, is bilinear in binary
variables and is linearized via standard McCormick inequalities. The linearization is complemented by the compatibility constraint
\begin{align}
    p_{skj} \leq \sum_{t \in T_{sk}} b_{jt},
    \qquad \forall (s,k) \in \mathcal{B},\; j \in J,
    \label{eq:compatibilidad}
\end{align}
which restricts each block to bins of geometrically compatible
types. Without it, a block assigned to an incompatible type would not be
detected as infeasible by~\eqref{c:capacidad}, since the McCormick variables
are only instantiated for compatible pairs.

Constraints~\eqref{c:simetria} break the bin-permutation symmetry by enforcing
sequential activation: if bin $j+1$ is in use, then bin $j$ must also be in
use, so any solution using $k$ bins uses exactly $\{1, \ldots, k\}$. Finally, the number of bins $|J|$ is set to an upper bound on the number of
bins required by any feasible solution; in practice, any feasible solution
provides a valid value for $|J|$.

The formulation's size is dominated by the $|\mathcal{B}| \cdot |J|$ binary
variables $p_{skj}$; the bin-activation constraints~\eqref{c:activ_bin} and
SKU-presence constraints~\eqref{c:presencia_sku} each contribute a further
$|\mathcal{B}| \cdot |J|$ inequalities, giving an overall
$O(|\mathcal{B}| \cdot |J|)$ scaling. Since both $|\mathcal{B}|$ and $|J|$ are
of order $10^5$--$10^6$ for real instances, direct solution is intractable,
motivating the  column generation and heuristic approaches developed in
Sections~\ref{sec:column_generation} and~\ref{sec:bfd}, respectively.

%==============================================================================
\section{Column Generation}
\label{sec:column_generation}
%==============================================================================

The compact formulation~\eqref{obj}--\eqref{c:binarias} has $O(|\mathcal{B}| \cdot |J|)$ variables and constraints, rendering direct solution intractable for real-scale instances. In this section, we develop a column generation scheme that exploits a set partitioning reformulation to produce tight LP lower bounds against which any heuristic solution can be assessed.

The reformulation is based on the concept of a \textit{packing pattern}: a complete bin configuration specifying a type $t_p \in T$ and a set of blocks $\mathcal{B}_p \subseteq \mathcal{B}$, with cost $c_p = L_{t_p} W_{t_p} H_{t_p}$. A pattern is \emph{valid} if it satisfies the SKU limit ($|\{s : (s,k) \in \mathcal{B}_p\}| \leq m$), SKU uniqueness (at most one block per SKU), and X-capacity ($\sum_{(s,k) \in \mathcal{B}_p} X^*_{sk,t_p} \leq L_{t_p}$).

Let $\mathcal{P}$ denote the (exponentially large) set of all valid patterns and let $y_p \in \{0,1\}$ indicate whether pattern~$p$ is selected. The bin dimensioning problem is equivalently stated as:
\begin{align}
    \min \quad & \sum_{p \in \mathcal{P}} c_p \, y_p
    \label{eq:sp_obj} \\
    \text{s.t.} \quad
    & \sum_{p \in \mathcal{P}:\, (s,k) \in \mathcal{B}_p} y_p = 1,
      \quad \forall (s,k) \in \mathcal{B}, \label{eq:sp_cov} \\
    & y_p \in \{0,1\}, \quad \forall p \in \mathcal{P}. \label{eq:sp_bin}
\end{align}
Every feasible solution of the compact model corresponds to a selection of patterns in~\eqref{eq:sp_obj}--\eqref{eq:sp_bin}, and vice versa.

Since $\mathcal{P}$ cannot be enumerated explicitly, column generation operates on a \emph{Restricted Master Problem} (RMP): the LP relaxation of~\eqref{eq:sp_obj}--\eqref{eq:sp_bin} restricted to a subset $\mathcal{P}' \subseteq \mathcal{P}$. The RMP is initialized with a set of feasible patterns. At each iteration the RMP is solved, yielding dual values $\pi^*_{sk}$ for the covering constraints~\eqref{eq:sp_cov}. The \emph{pricing subproblem} (SP) then searches for patterns with negative reduced cost $\bar{c}_p = c_p - \sum_{(s,k) \in \mathcal{B}_p} \pi^*_{sk} < 0$. If none exists, the current RMP solution is optimal for the full LP relaxation; otherwise, the best candidates are added to $\mathcal{P}'$ and the process repeats.

The SP seeks the pattern that minimizes the reduced cost, or equivalently, that maximizes $\sum_{(s,k) \in \mathcal{B}_p} \pi^*_{sk} - c_p$. Given dual values $\pi^*_{sk}$ from the RMP, the SP solves:
\begin{align}
    \max \quad
    & \sum_{(s,k) \in \mathcal{B}} \pi^*_{sk} \cdot p_{sk}
      - \sum_{t \in T} V_t \cdot b_t
    \label{eq:sp_obj_full} \\[0.5em]
    \text{s.t.} \quad
    & \sum_{t \in T} b_t = 1, \label{eq:sp_tipo} \\[0.5em]
    & \omega_s \geq p_{sk},
      && \forall s \in S,\; k \in \{1,\ldots,K_s\},
      \label{eq:sp_presencia} \\[0.5em]
    & \sum_{s \in S} \omega_s \leq m, \label{eq:sp_max_skus} \\[0.5em]
    & \sum_{k=1}^{K_s} p_{sk} \leq 1,
      && \forall s \in S, \label{eq:sp_unicidad} \\[0.5em]
    & \sum_{(s,k) \in \mathcal{B}} \sum_{t \in T}
        X^*_{sk,t} \cdot p_{sk} \cdot b_t
      \leq \sum_{t \in T} L_t \cdot b_t,
      \label{eq:sp_capacidad} \\[0.5em]
    & p_{sk},\, b_t,\, \omega_s \in \{0,1\}.
      \label{eq:sp_binary}
\end{align}
Rather than solving~\eqref{eq:sp_obj_full}--\eqref{eq:sp_binary} directly, we adopt a \emph{per-type pricing} strategy: for each $t \in T$ we fix $b_t = 1$ and $b_{t'} = 0$ for all $t' \neq t$, and optimize only over the block selection. Fixing the type eliminates the $b_t$ variables and linearizes the capacity constraint~\eqref{eq:sp_capacidad}, since $X^*_{sk,t}$ is a constant, yielding $|T|$ independent subproblems each substantially more compact than~\eqref{eq:sp_obj_full}--\eqref{eq:sp_binary}. When the optimal value of the subproblem for type~$t$ exceeds zero, the selected blocks define a pattern with negative reduced cost.

Before constructing the MILP for a given type~$t$, two necessary conditions are checked to avoid solving subproblems that cannot produce improving patterns. First, if no compatible block has a positive dual value ($\max_{(s,k) \in \mathcal{B}_t} \pi^*_{sk} \leq 0$), no subset of blocks can yield a positive dual benefit, and type~$t$ is skipped. Second, let $\hat{\pi}_s = \max_{k:\,(s,k) \in \mathcal{B}_t} \pi^*_{sk}$ be the best dual value among the blocks of SKU~$s$ compatible with~$t$, and let $\hat{\pi}^{(1)} \geq \hat{\pi}^{(2)} \geq \cdots \geq \hat{\pi}^{(|S_t|)}$ be these values in decreasing order. If $\sum_{i=1}^{\min(m,\,|S_t|)} \hat{\pi}^{(i)} \leq V_t$, then even the most favorable selection of $m$~SKUs cannot produce a dual benefit exceeding the bin cost, and type~$t$ is likewise skipped. These checks become increasingly effective as the duals stabilize in later iterations.

To accelerate convergence, each per-type SP is solved with Gurobi's solution
pool, extracting up to $k$ candidate patterns per type. The candidates
collected across all types are pooled and sorted by reduced cost. The parameter
$P_{\max}$ bounds the number of columns added to the RMP per iteration: the
$P_{\max}$ candidates with the most negative reduced cost are retained, and
among these, any pattern already present in $\mathcal{P}'$ is filtered out by
fingerprint-based duplicate detection. The number of columns effectively added
in an iteration may therefore be smaller than $P_{\max}$. The algorithm
terminates when no candidate with reduced cost below $-\epsilon$ is
found---equivalently, when no new column is added---or when the maximum
iteration count $I_{\max}$ is reached.

\begin{algorithm}[H]
\caption{Column Generation for Bin Dimensioning}
\label{alg:column_generation}
\begin{algorithmic}[1]
\REQUIRE Block set $\mathcal{B}$, bin types $T$, initial patterns $\mathcal{P}'$, tolerance $\epsilon$, limits $I_{\max}$, $P_{\max}$, pool size $k$
\ENSURE LP lower bound $z^*_{\text{LP}}$, pattern set $\mathcal{P}'$
\STATE Build RMP from $\mathcal{P}'$
\FOR{$\text{iter} = 1, \ldots, I_{\max}$}
    \STATE Solve RMP $\rightarrow$ objective $z_{\text{RMP}}$, duals $\pi^*$
    \STATE $\text{Candidates} \leftarrow \emptyset$
    \FOR{each type $t \in T$ not eliminated by pre-filtering}
        \STATE Solve per-type SP with $b_t = 1$ fixed, pool of size $k$
        \STATE $\text{Candidates} \leftarrow \text{Candidates} \cup \{p : \bar{c}_p < -\epsilon\}$
    \ENDFOR
    \STATE Add the best $\min(|\text{Candidates}|, P_{\max})$ non-duplicate patterns to $\mathcal{P}'$
    \IF{no pattern added}
        \STATE \textbf{break} \COMMENT{LP optimality reached}
    \ENDIF
\ENDFOR
\RETURN $z^*_{\text{LP}} \leftarrow z_{\text{RMP}}$, $\mathcal{P}'$
\end{algorithmic}
\end{algorithm}

The LP lower bound satisfies $z^*_{\text{LP}} \leq z^*_{\text{OPT}}$, where $z^*_{\text{OPT}}$ is the optimal integer value. Once column generation converges, the pattern set $\mathcal{P}'$ may be used to solve a \emph{Restricted Integer Program} (RIP): the integer version of~\eqref{eq:sp_obj}--\eqref{eq:sp_bin} restricted to $\mathcal{P}'$, with covering constraints ($\geq 1$) replacing the partitioning constraints ($= 1$) to guarantee feasibility without branch-and-price. 
The solver is warm-started using the nearly integer LP solution, setting
$y_p = 1$ in the MIP start for all patterns with $y^*_p \geq 0.999$.

%==============================================================================
\section{Best-Fit-Decreasing Heuristic}
\label{sec:bfd}
%==============================================================================

The column generation framework of Section~\ref{sec:column_generation} provides tight lower bounds but requires substantial computation time. For large-scale instances, a fast heuristic is needed both as a standalone method and as an initializer for column generation. In this section we present a three-phase algorithm based on the Best-Fit-Decreasing (BFD) strategy~\citep{johnson1973near,coffman1996approximation}, adapted to the bin dimensioning problem.

\paragraph{Isolated block pre-filter.}
A block $(s,k)$ is \emph{isolated} if, for every compatible bin type $t \in T_{sk}$, the residual X-space after placing the block cannot accommodate the smallest block of any other SKU. Such blocks will necessarily occupy a bin alone in any feasible solution. To test this efficiently, the algorithm precomputes, for each bin type~$t$, the two smallest $X^*$ values across compatible SKUs---call them $\hat{X}^{(1)}_t$ and $\hat{X}^{(2)}_t$, corresponding to distinct SKUs. A block $(s,k)$ is isolated if $\hat{X}^{\mathrm{other}}_{s,t} > L_t - X^*_{sk,t}$ for all $t \in T_{sk}$, where $\hat{X}^{\mathrm{other}}_{s,t}$ equals $\hat{X}^{(1)}_t$ when the SKU achieving that minimum is not~$s$, and $\hat{X}^{(2)}_t$ otherwise. This precomputation runs in $O(|\mathcal{F}|)$ time, where $|\mathcal{F}|$ is the number of feasible block--type pairs. Isolated blocks are assigned directly to their smallest compatible type (by volume) and removed from the BFD phase.

\paragraph{Best-Fit-Decreasing with round-robin ordering.}
The remaining blocks are processed by Best-Fit-Decreasing. The algorithm maintains a set of open bins, each characterized by its type, the blocks assigned so far, and the residual X-length. For each incoming block, it selects the compatible open bin that minimizes the residual length after insertion; if no open bin can accommodate the block, a new bin of the smallest compatible type is opened. An open bin is closed when its residual space falls below a threshold, when it reaches the SKU limit~$m$, or when no remaining block can fit in it.

In classical BFD, items are processed in decreasing order of size~\citep{johnson1973near}. Here, however, the SKU uniqueness constraint (at most one block per SKU per bin) would cause a simple decreasing-volume ordering to process all blocks of the same SKU consecutively, each opening a new bin and leaving those bins underutilized until blocks of other SKUs are processed. To mitigate this, blocks are processed in an \emph{interleaved round-robin} order: SKUs are sorted by the volume of their largest block, and processing alternates among SKUs in successive rounds, taking one block per SKU per round in decreasing internal order. This interleaving ensures that consecutive blocks come from distinct SKUs, promoting immediate bin reuse.

To speed up the search for compatible open bins, an index structure organizes bins by type, restricting the search to the types compatible with the current block. Algorithm~\ref{alg:indexed_search} details the procedure. The index maintains three dictionaries: $\texttt{by\_type}[t]$, the list of open bins of type~$t$; $\texttt{bins\_with\_sku}[s]$, the set of bin identifiers containing at least one block of SKU~$s$; and $\texttt{by\_id}[j]$, a direct reference to open bin~$j$ for $O(1)$ access. The per-block search cost is $O(|T_{sk}| \cdot \bar{k})$, where $\bar{k}$ is the average number of open bins per type.

\begin{algorithm}[H]
\caption{Indexed Search for Compatible Bins}
\label{alg:indexed_search}
\begin{algorithmic}[1]
\REQUIRE Block $(s,k)$, open bin index, parameter $m$
\ENSURE List of compatible open bins
\STATE $\text{Compatible} \leftarrow []$
\STATE $\text{Excluded} \leftarrow \texttt{bins\_with\_sku}[s]$
\FOR{each type $t \in T_{sk}$}
    \STATE $x^{*} \leftarrow X^{*}_{sk,t}$
    \FOR{each bin $j \in \texttt{by\_type}[t]$}
        \IF{$j \in \text{Excluded}$}
            \STATE \textbf{continue} \COMMENT{SKU uniqueness}
        \ENDIF
        \IF{$|\mathcal{S}_j| \geq m$}
            \STATE \textbf{continue} \COMMENT{SKU limit}
        \ENDIF
        \IF{$x^* > L_j^{\text{res}}$}
            \STATE \textbf{continue} \COMMENT{Insufficient space}
        \ENDIF
        \STATE $\text{Compatible} \leftarrow \text{Compatible} \cup \{j\}$
    \ENDFOR
\ENDFOR
\RETURN Compatible
\end{algorithmic}
\end{algorithm}

\paragraph{Solution assembly.}
The bins from the pre-filter and BFD phases are merged and renumbered to produce the final solution. The global volumetric efficiency is $\eta = V \,/\, \sum_j V_{t_j}$, with $V$ defined in Section~\ref{sec:problem_definition}.

%==============================================================================
\section{Computational Experiments}
\label{sec:experiments}
%==============================================================================

This section evaluates the methodology on four synthetic datasets generated from proprietary real-world data provided by the e-commerce company. We first describe the datasets and experimental setup, then present scalability experiments on controlled sub-instances of increasing size, and finally report full-scale results for the complete datasets.

%-----------------------------------------------------------------------
\subsection{Datasets and Experimental Setup}
\label{subsec:datasets_setup}
%-----------------------------------------------------------------------

The experiments employ four synthetic datasets generated from proprietary real-world data collected from the company's distribution centers, spanning a range of scales and product profiles.

The four datasets are designed to represent qualitatively distinct
distribution center profiles observed in the company's network,
differing in scale, product category, demand concentration, and
replenishment patterns. This diversity is
intentional: rather than testing the methodology on homogeneous
instances, the experiments are designed to stress-test it across
operational profiles that differ structurally, so that the results
reflect the range of conditions the approach would face in
deployment.

Each dataset specifies, for every SKU~$s$, the tuple $(l_s,\, w_s,\, h_s,\, d_s,\, n_s,\, \text{rotatable})$ together with the corresponding bin catalog~$T$. 

The synthetic datasets used in the experiments are publicly available at \url{https://github.com/anticiclon/bin-packing-1d}.

The generation process was designed to preserve the main statistical characteristics relevant to the bin dimensioning problem, including the distributions of item dimensions, SKU demand levels, per-bin quantity limits, rotatability rates, and overall instance scales, while avoiding disclosure of sensitive operational information.

The catalogs of available bin types were defined in collaboration with the
engineering team from the company, reflecting structural constraints of the
shelving systems, equipment supplier specifications, and compatibility with
existing distribution center infrastructure. The resulting catalogs contain
24~types for small products and 30~types for large products.

The catalog of small products $\mathcal{T}_{\text{small}}$ is the Cartesian
product of lengths $L_t \in \{38, 46, 57\}$~cm, heights $H_t \in \{25, 35, 45, 55\}$~cm, and widths $W_t \in \{30, 60\}$~cm, yielding $2 \times 4 \times 3 = 24$ types with volumes ranging from 28{,}500~cm$^3$ to 188{,}100~cm$^3$. 
The catalog of large products $\mathcal{T}_{\text{large}}$ uses $L_t \in \{57, 76, 115\}$~cm, $H_t \in \{50, 70, 100, 150, 200\}$~cm, and $W_t \in \{70, 100\}$~cm, giving $2 \times 5 \times 3 = 30$ types with volumes from
199{,}500~cm$^3$ to 2{,}300{,}000~cm$^3$.

The datasets are classified into two size categories: \emph{large products} (DS-1 and DS-2), which use the catalog $\mathcal{T}_{\text{large}}$, and \emph{small products} (DS-3 and DS-4), which use the catalog $\mathcal{T}_{\text{small}}$. 
Table~\ref{tab:datasets_skus} reports the dataset characteristics: the dataset
name, the category of the products in the catalog, the number of SKUs, the
number of items, the percentage of SKUs with unit demand ($d_s=1$), the
percentage of rotatable products and the number of bin types.
Together, these profiles span four orders of magnitude in instance size and a wide range of demand concentration: the percentage of SKUs with unit demand grows from 5.27\% in DS-1 to 23.30\% in DS-2, 39.51\% in DS-3, and 57.80\% in DS-4, confirming that DS-4 is the dataset structurally closest to a classical variable-sized bin packing setting.

\begin{table}[htbp]
\centering
\caption{Dataset characteristics.}
\label{tab:datasets_skus}
\begin{tabular}{@{}llrrrrr@{}}
\toprule
\textbf{Dataset} & \textbf{Catalog} & \textbf{$|S|$} & \textbf{Items} & \textbf{\% SKUs $d_s=1$} & \textbf{Rotatable} & \textbf{$|T|$} \\
\midrule
DS-1 & large & 1{,}556 & 308{,}059 & 5.27\% & 97.0\% & 30 \\
DS-2 & large & 94{,}556 & 905{,}637 & 23.30\% & 99.8\% & 30 \\
DS-3 & small & 503{,}931 & 2{,}218{,}609 & 39.51\% & 96.0\% & 24 \\
DS-4 & small & 1{,}520{,}442 & 4{,}034{,}196 & 57.80\% & 100.0\% & 24 \\
\bottomrule
\end{tabular}
\end{table}

Table~\ref{tab:datasets_bloques} summarizes the block aggregation results. DS-1 has 1,556 SKUs but generates 11,830 blocks (7.6 per SKU on average), the highest ratio among the datasets. The reason behind it is that several of its SKUs have very large item counts (up to 11,499) that exceed the per-bin limit $n_s$ and must be split into many blocks. DS-2 has an intermediate ratio, which is 2.1, while DS-3 and DS-4 exhibit a nearly 1:1 ratio, since most of their SKUs fit within a single block.

Configuration deduplication substantially reduces the number of configurations
that must be evaluated in all datasets, although the magnitude of the reduction
varies with the repetition of item dimensions, block sizes, and rotatability
patterns. DS-1 reduces 11,830 blocks to 2,145 unique configurations, DS-2
reduces 198,640 blocks to 114,869 unique configurations, DS-3 reduces 553,822
blocks to 352,162 unique configurations, and DS-4 reduces 1{,}725{,}931 blocks to
698,029 unique configurations.

\begin{table}[htbp]
\centering
\caption{Results of block aggregation and configuration computation.}
\label{tab:datasets_bloques}
\begin{tabular}{@{}lrrrr@{}}
\toprule
\textbf{Dataset} & \textbf{$|\mathcal{B}|$} & \textbf{Blocks/SKU} & \textbf{Unique configs.} & \textbf{Feasible pairs} \\
\midrule
DS-1 & 11{,}830 & 7.6 & 2{,}145 & 38{,}423\phantom{0}(59.7\%) \\
DS-2 & 198{,}640 & 2.1 & 114{,}869 & 3{,}198{,}156\phantom{0}(92.8\%) \\
DS-3 & 553{,}822 & 1.1 & 352{,}162 & 7{,}885{,}440\phantom{0}(93.3\%) \\
DS-4 & 1{,}725{,}931 & 1.1 & 698{,}029 & 16{,}337{,}817\phantom{0}(97.5\%) \\
\bottomrule
\end{tabular}
\end{table}

The feasible-pair percentage indicates the fraction of (configuration, bin-type) combinations that are geometrically compatible. 
DS-1 has the lowest compatibility rate, 59.7\%, because its high $n_s$ values
produce blocks requiring substantial X-length, which renders many smaller
large-product bin types incompatible. DS-2, DS-3, and DS-4 exhibit much higher
compatibility rates---92.8\%, 93.3\%, and 97.5\%, respectively---as their
smaller blocks fit a broader range of bin types.

All experiments were executed on the Euler HPC cluster at CeMEAI (ICMC/USP)
using the PBS/Torque scheduler with extended walltimes of up to 336~hours.
Each job was allocated 8~CPU cores and 32~GB RAM on nodes equipped with
dual Intel Xeon E5-2650 v2 processors (2.60~GHz, 8~cores per socket).
The software stack consists of Python~3.12.8 and Gurobi~12.0.3.
The SKU limit is set to $m = 4$, as specified by the company based on operational picking productivity requirements. The remaining
algorithm parameters are: BFD residual-space closing threshold of 5\%; column
generation maximum column additions per iteration $P_{\max} = 50$, pricing pool size
$k = 5$, reduced-cost tolerance $\varepsilon = 0.01$, and maximum iterations
$I_{\max} = 1{,}000$.

%-----------------------------------------------------------------------
\subsection{Scalability Experiments on Sub-Instances}
\label{subsec:scalability}
%-----------------------------------------------------------------------

To characterise how computational cost and solution quality scale with instance
size, we construct sub-instances by sampling subsets of SKUs uniformly at
random from each full dataset.
Let $\mathcal{S} \subseteq S$ denote such a
sampled subset of SKUs, so that $|\mathcal{S}|$ is the target size of the
sub-instance.
For each target size~$|\mathcal{S}|$, all items
and blocks of the selected SKUs are included, so the block count $|\mathcal{B}|$
is determined by the aggregation rule of Section~\ref{subsec:aggregation}. Five
independent random seeds are used per size; the full dataset itself is included
as a single-seed reference point. For each sub-instance, BFD is executed first
to obtain a feasible solution, followed by column generation to produce an LP
lower bound. Table~\ref{tab:scalability} reports the aggregated results; all
gaps and runtimes are averaged over the seeds that converged within the time
limit.

\begin{table}[ht]
\centering
\small
\caption{Scalability results ($m=4$). Gap BFD--LP reported as mean $\pm$ std over random seeds. Dashes indicate instances where CG did not converge within the time limit. $^\dagger$~Single seed (full instance).}
\begin{tabular}{@{}llrrrrr@{}}
\toprule
\textbf{Dataset} & \textbf{$|\mathcal{S}|$} & \textbf{$|\mathcal{B}|$} & \textbf{BFD bins} & \textbf{$\eta_{\text{BFD}}$ (\%)} & \textbf{Gap BFD--LP (\%)} & \textbf{CG time (h)} \\
\midrule
\multirow{6}{*}{DS-1} & 50 & 439 & 358 & 70.74 & 1.15 $\pm$ 0.74 & 0.08 \\
 & 100 & 687 & 521 & 71.31 & 2.73 $\pm$ 1.70 & 0.30 \\
 & 250 & 2,093 & 1,527 & 74.30 & 1.60 $\pm$ 0.35 & 1.68 \\
 & 500 & 3,772 & 2,684 & 74.99 & 2.12 $\pm$ 0.54 & 4.52 \\
 & 1,000 & 7,847 & 5,687 & 74.22 & 2.20 $\pm$ 0.29 & 18.68 \\
 & 1,556 & 11,830 & 8,480 & 74.32 & 2.26$^\dagger$ & 65.59 \\
\midrule
\multirow{6}{*}{DS-2} & 100 & 190 & 108 & 57.77 & 24.79 $\pm$ 5.56 & 0.14 \\
 & 500 & 973 & 519 & 58.69 & 22.92 $\pm$ 2.45 & 0.68 \\
 & 2,000 & 4,181 & 2,219 & 58.99 & 22.05 $\pm$ 2.05 & 7.96 \\
 & 10,000 & 20,894 & 10,862 & 59.61 & --- & --- \\
 & 50,000 & 104,843 & 54,553 & 60.34 & --- & --- \\
 & 94,556 & 198,640 & 103,538 & 60.58 & --- & --- \\
\midrule
\multirow{7}{*}{DS-3} & 100 & 108 & 52 & 60.95 & 21.66 $\pm$ 5.89 & 0.05 \\
 & 500 & 554 & 268 & 61.64 & 20.88 $\pm$ 1.70 & 0.39 \\
 & 2,000 & 2,198 & 1,051 & 61.34 & 20.23 $\pm$ 0.93 & 2.41 \\
 & 10,000 & 11,019 & 5,327 & 61.13 & --- & --- \\
 & 50,000 & 54,943 & 26,369 & 61.46 & --- & --- \\
 & 200,000 & 219,841 & 105,538 & 61.48 & --- & --- \\
 & 503,931 & 553,822 & 266,115 & 61.46 & --- & --- \\
\midrule
\multirow{8}{*}{DS-4} & 100 & 115 & 49 & 44.10 & 43.85 $\pm$ 5.61 & 0.07 \\
 & 500 & 568 & 250 & 44.99 & 47.15 $\pm$ 1.85 & 0.49 \\
 & 2,000 & 2,280 & 1,020 & 46.12 & 45.72 $\pm$ 1.48 & 3.78 \\
 & 10,000 & 11,365 & 5,035 & 46.23 & --- & --- \\
 & 50,000 & 56,740 & 25,126 & 46.16 & --- & --- \\
 & 200,000 & 227,089 & 100,578 & 46.19 & --- & --- \\
 & 500,000 & 567,655 & 251,402 & 46.15 & --- & --- \\
 & 1,520,442 & 1,725,931 & 764,622 & 46.13 & --- & --- \\
\bottomrule
\end{tabular}
\label{tab:scalability}
\end{table}

The most salient finding is the divide in BFD solution quality across datasets, which reflects the differences in their operational profiles. For DS-1 the BFD--LP gap is small, 
ranging from 1.15\% to 2.73\% across the tested sizes, with a single-seed value of 2.26\% on the full instance. 
DS-2 and DS-3 form an intermediate group, with gaps around 22--25\% and 20--22\%, respectively, at the sizes where column generation converges. DS-4 is the most challenging dataset, with gaps around 44--47\%. Figure~\ref{fig:gap_skus} illustrates the relation between the  BFD-LP gap and the number of SKUs in the different categories of datasets.

One may also observe in Table~\ref{tab:scalability} that the volumetric efficiency $\eta_{\text{BFD}}$ is essentially flat across sizes within each dataset, indicating stable BFD performance  
with respect to instance scale within each dataset. Across datasets, lower BFD volumetric efficiency is generally associated with
larger BFD--LP gaps: DS-1 combines the highest efficiency with the smallest
gap, whereas DS-4 combines the lowest efficiency with the largest gap. DS-2 and
DS-3 occupy an intermediate region. This outcome validates the rationale for
using structurally diverse datasets: a methodology evaluated only on instances
sharing a common profile would give a misleading picture of its practical
performance.

\begin{figure}[htbp]
    \centering
    \includegraphics[width=0.75\textwidth, trim={0.2cm 0.3cm 0.2cm 0.2cm},
    clip]{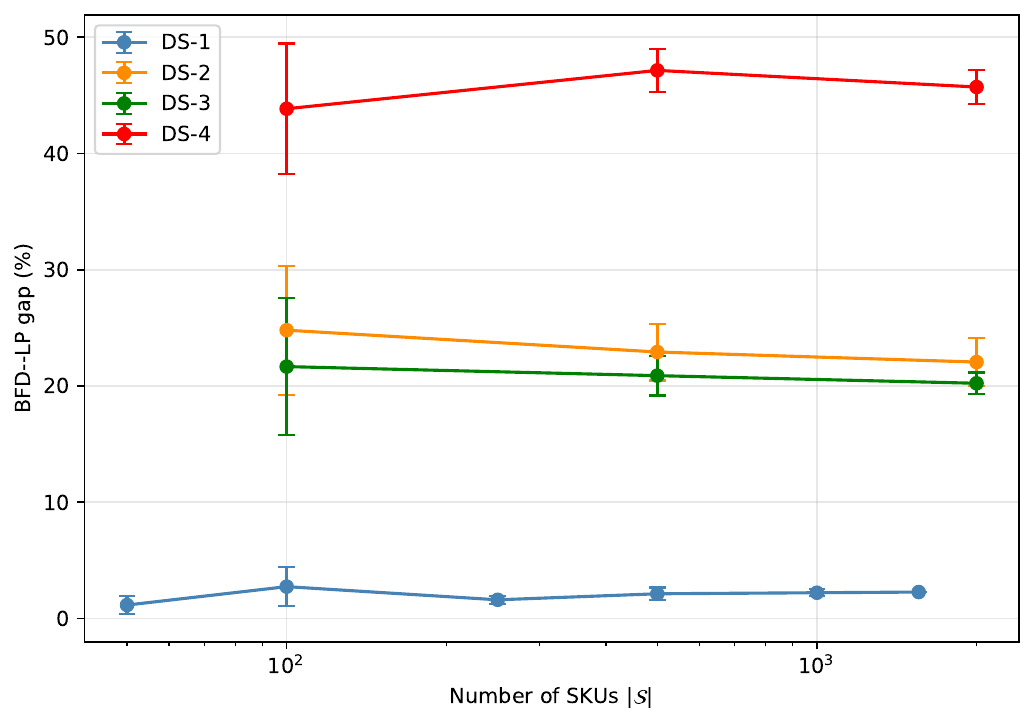}
    \caption{BFD--LP gap (\%) vs. number of SKUs $|\mathcal{S}|$
    (mean $\pm$ std over five seeds, except for the full DS-1 instance, which is a single-seed result). Only sizes where CG converged are shown.}
    \label{fig:gap_skus}
\end{figure}

Figure~\ref{fig:variability_skus} shows the variability of the BFD--LP gap
across random seeds. The coefficient of variation generally decreases as
instances grow. For DS-1 it is high at the smallest sizes, about 64\% at
50 SKUs and 62\% at 100 SKUs, but falls to roughly 13\% at 1,000 SKUs. For
DS-2 it decreases from about 22\% at 100 SKUs to 9\% at 2,000 SKUs; for DS-3
from about 27\% to 5\%; and for DS-4 from about 13\% to 3\%. This confirms that
the gap estimates become increasingly stable at scale.

\begin{figure}[htbp]
    \centering
    \includegraphics[width=0.85\textwidth, trim={0.2cm 0.3cm 0.2cm 0.2cm},
    clip]{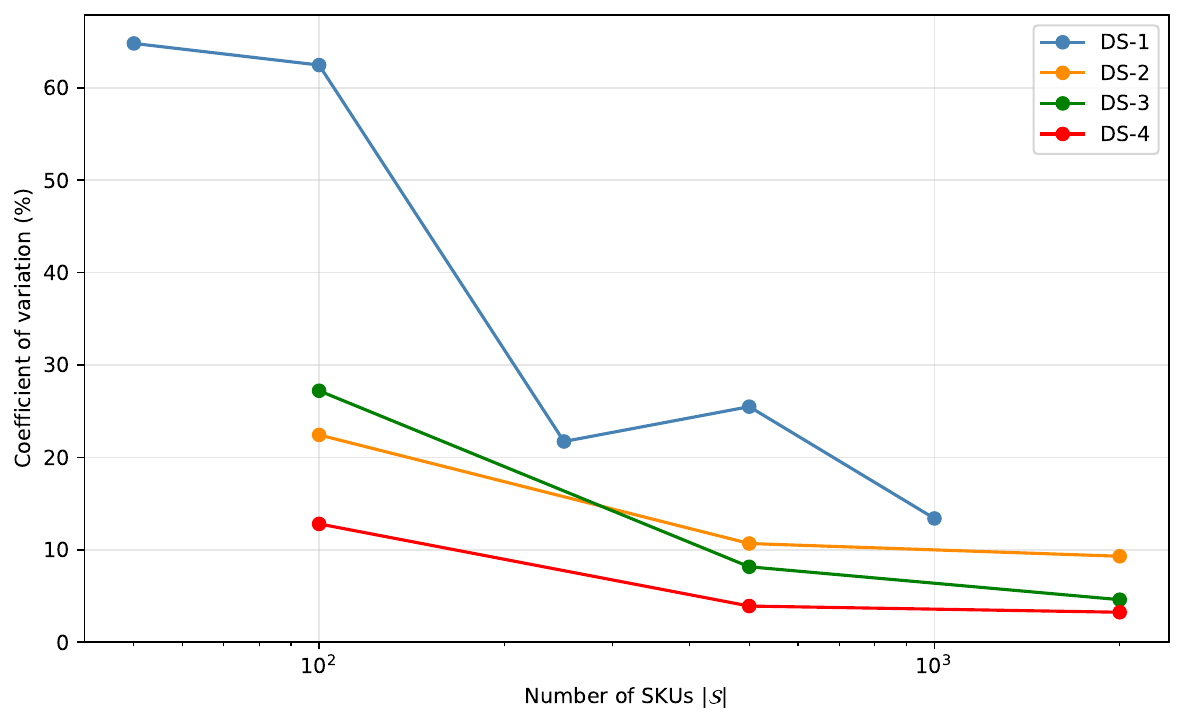}
    \caption{Coefficient of variation (\%) of the BFD--LP gap across random
    seeds, as a function of $|\mathcal{S}|$. Only sizes where CG converged are
    shown.}
    \label{fig:variability_skus}
\end{figure}

Figure~\ref{fig:trivial_skus} compares the number of bins produced by BFD
against the trivial singleton bound. DS-4 still attains the largest reduction,
about 56\%, followed by DS-3 with about 52\% and DS-2 with about 48\%.
DS-1 shows the smallest reduction, about 28\%, because as a large-product
center its items are large relative to the available bins, so fewer blocks can
share a bin.

\begin{figure}[htbp]
    \centering
    \includegraphics[width=0.8\textwidth, trim={0.2cm 0.3cm 0.2cm 0.2cm},
    clip]{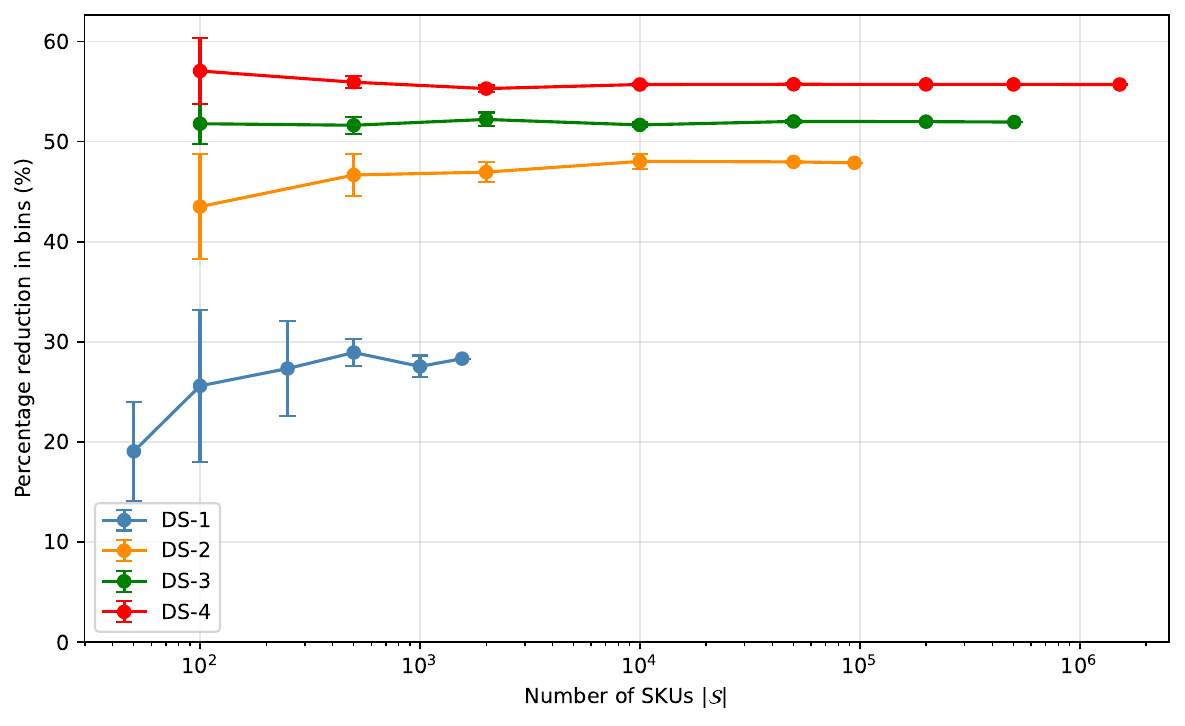}
    \caption{Percentage reduction in the number of bins produced by BFD
    relative to the trivial singleton bound (one bin per block), as a function
    of $|\mathcal{S}|$.}
    \label{fig:trivial_skus}
\end{figure}

Figure~\ref{fig:cgtime_skus} compares the runtime required by CG and BFD.
CG runtime grows steeply with instance size. For DS-1, it grows from about
0.08 hours at $|\mathcal{S}| = 50$ to 65.6 hours at the full instance
($|\mathcal{S}| = 1{,}556$), and converges at every size tested. For DS-2,
DS-3, and DS-4, column generation converges only up to
$|\mathcal{S}| = 2{,}000$, requiring 7.96 hours, 2.41 hours, and 3.78 hours,
respectively. Beyond that, the restricted master problem becomes too large for
the simplex method to resolve within the time budget.
Table~\ref{tab:summary} summarises the convergence frontier and the overall gap statistics.

\begin{table}[ht]
\centering
\small
\caption{Summary by dataset. Max $|\mathcal{S}|$ and Max $|\mathcal{B}|$
indicate the largest instance where column generation converged. Gap BFD--LP is
reported as mean $\pm$ std over all converged tested instances.}
\begin{tabular}{lrrrrrr}
\toprule
\textbf{Dataset} & \textbf{$|\mathcal{S}|$} & \textbf{$|\mathcal{B}|$} & \textbf{Max $|\mathcal{S}|$} & \textbf{Max $|\mathcal{B}|$} & \textbf{Gap BFD--LP (\%)} & \textbf{$\eta_{\text{BFD}}$ (\%)} \\
\midrule
DS-1 & 1{,}556 & 11{,}830 & 1{,}556 & 11{,}830 & 1.97 $\pm$ 0.96 & 73.16 \\
DS-2 & 94{,}556 & 198{,}640 & 2{,}000 & 4{,}181 & 23.25 $\pm$ 3.63 & 59.14 \\
DS-3 & 503{,}931 & 553{,}822 & 2{,}000 & 2{,}198 & 20.92 $\pm$ 3.37 & 61.34 \\
DS-4 & 1{,}520{,}442 & 1{,}725{,}931 & 2{,}000 & 2{,}280 & 45.57 $\pm$ 3.54 & 45.72 \\
\bottomrule
\end{tabular}
\label{tab:summary}
\end{table}

DS-1 is the only dataset for which CG converges at its full instance
(11,830 blocks, 65.6 hours): its complete inventory is small enough to remain
below the convergence frontier. For DS-2, DS-3, and DS-4, the full instances are
two to three orders of magnitude larger ($10^5$--$10^6$ blocks), so convergence
is limited to roughly $|\mathcal{S}| = 2{,}000$. The BFD runtime remains orders of
magnitude below CG at every size (Figure~\ref{fig:cgtime_skus}), confirming BFD as
the only feasible method for the full-scale datasets beyond DS-1.

\begin{figure}[htbp]
    \centering
    \includegraphics[width=0.8\textwidth, trim={0.2cm 0.3cm 0.2cm 0.2cm},
    clip]{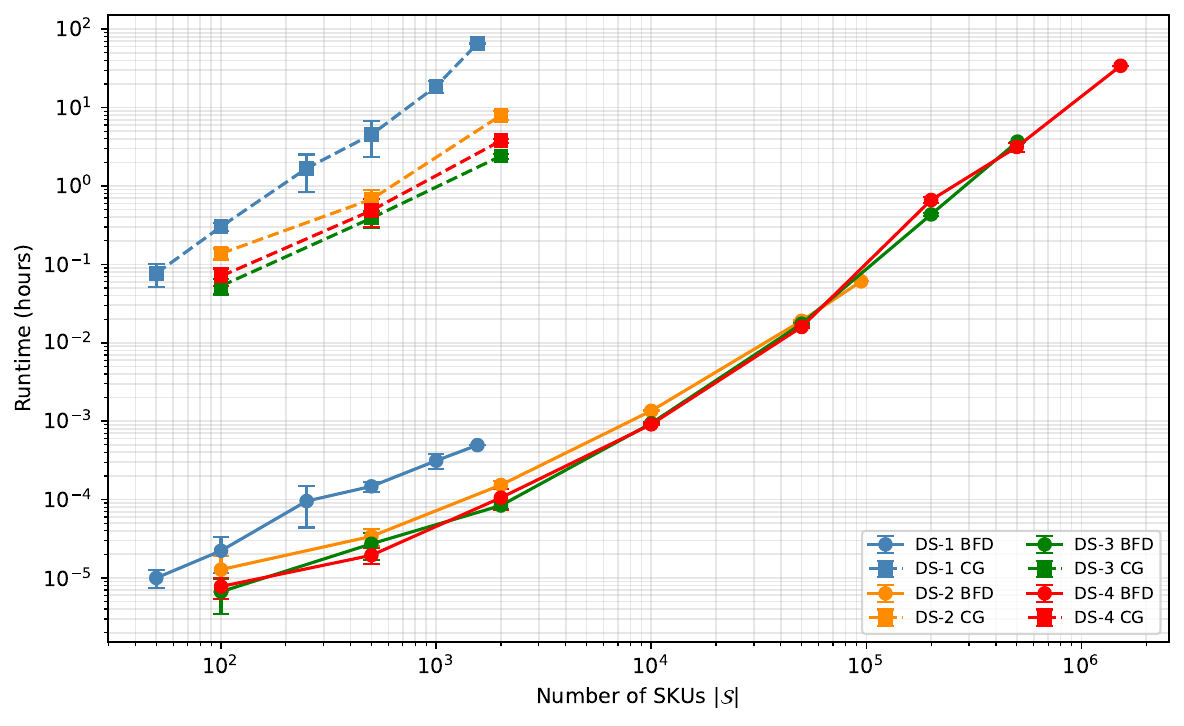}
    \caption{BFD runtime (solid lines, circles) and CG runtime (dashed lines,
    squares) in hours (log scale) vs.\ number of SKUs $|\mathcal{S}|$ (log
    scale). CG curves are shown only where convergence was achieved.}
    \label{fig:cgtime_skus}
\end{figure}

%-----------------------------------------------------------------------
\subsection{Full-Scale Results}
\label{subsec:fullscale}
%-----------------------------------------------------------------------

This subsection applies the complete methodology to the four full-scale synthetic datasets.
The BFD heuristic is used to obtain feasible solutions, and column
generation is applied to certify solution quality via LP lower bounds. The RMP
is initialized with the patterns from the BFD solution.

%Table~\ref{tab:fullscale_results} consolidates the main results across all datasets.
Table~\ref{tab:fullscale_results} consolidates the BFD results across all datasets. The BFD heuristic scales effectively to industrial-size instances. Its
runtime ranges from about 2 seconds on DS-1 to %15.7
34.1 hours on DS-4 (1.7M~blocks); the dominant cost is the best-fit search, which for each block scans all open bins of compatible types and grows more expensive as the number of simultaneously
open bins increases.

\begin{table}[htbp]
\centering
\caption{Full-scale BFD results ($m = 4$).}
\label{tab:fullscale_results}
\begin{tabular}{@{}lrrrrr@{}}
\toprule
\textbf{Dataset} & \textbf{$|\mathcal{B}|$} & \textbf{Isolated} & \textbf{BFD bins} & \textbf{BFD vol.\ (cm$^3$)} & \textbf{BFD time} \\
\midrule
DS-1 & 11{,}830 & 1.96\% & 8{,}480 & 14{,}486{,}827{,}800 & 2 s \\
DS-2 & 198{,}640 & 0.02\% & 103{,}538 & 35{,}545{,}032{,}600 & 3.6 min \\
DS-3 & 553{,}822 & 0.00\% & 266{,}115 & 11{,}735{,}936{,}550 & 3.7 h \\
DS-4 & 1{,}725{,}931 & 0.00\% & 764{,}622 & 25{,}839{,}967{,}500 & 34.1 h \\
\bottomrule
\end{tabular}
\end{table}

The isolated-block pre-filter has limited effect: it identifies 1.96\% of
blocks as isolated for DS-1, 0.02\% for DS-2, and essentially none for DS-3 and
DS-4. Thus, almost all blocks remain candidates for shared-bin packing in the
subsequent BFD phase.

Table~\ref{tab:bfd_packing} presents the percentage of bins per dataset that share 1 to 4 blocks, besides their volumetric efficiency. DS-1 attains the highest volumetric efficiency, 74.3\%. The bins of DS-1 exhibit
moderate sharing: 63.1\% hold one block and 36.9\% hold two or more blocks, consistent with the approximately 28\% reduction
over the singleton bound.

\begin{table}[htbp]
\centering
\caption{BFD packing structure and volumetric efficiency ($m = 4$).}
\label{tab:bfd_packing}
\begin{tabular}{@{}lrrrrr@{}}
\toprule
 & & \multicolumn{4}{c}{\textbf{Percentage of bins}} \\
\cmidrule(l){3-6}
\textbf{Dataset} & \textbf{$\eta_{\text{BFD}}$} & \textbf{1 block} & \textbf{2 blocks} & \textbf{3 blocks} & \textbf{4 blocks} \\
\midrule
DS-1 & 74.3\% & 63.1\% & 34.9\% & 1.3\% & 0.7\% \\
DS-2 & 60.6\% & 27.0\% & 61.6\% & 4.0\% & 7.4\% \\
DS-3 & 61.5\% & 18.9\% & 65.0\% & 5.1\% & 11.0\% \\
DS-4 & 46.1\% & 12.9\% & 62.8\% & 10.0\% & 14.3\% \\
\bottomrule
\end{tabular}
\end{table}

DS-2 achieves 60.6\% efficiency. A large majority of its bins, 73.0\%, contain
two or more blocks, with an average of approximately 1.92 blocks per bin.

DS-3 achieves 61.5\% efficiency. Its packing remains dense in terms of block
sharing: 81.1\% of bins hold two or more blocks, and the solution attains about
52\% bin reduction relative to the singleton bound.

DS-4 is the largest instance, with 1{,}725{,}931 blocks, and achieves 46.1\%
efficiency. Most bins, 87.1\%, contain two or more blocks, and 14.3\% reach the
maximum allowed number of SKUs per bin. The median $n_s = 1$ means that most
SKUs generate a single block, making the problem structurally closer to a
classical variable-sized bin packing problem, but the low volumetric efficiency
and large BFD--LP gaps on sub-instances indicate that this dataset is the most challenging for the heuristic.

%\GG{
%These efficiency levels are not directly comparable with those in the assortment-problem literature. \citet{alonso2016determining} equate shipper cost with volume, making their waste rates of 5.79\% and 9.27\% the complement of our $\eta$. However, they freely dimension a few shipper types for a few product types under fundamentally different constraints: their four-to-twelve limit concerns total units, not distinct product types, and their \emph{this-side-up} rule constrains orientation, not relative placement (Section~\ref{sec:related_works}). In contrast, we accommodate an entire inventory using a fixed catalog, with $m = 4$, per-SKU quantity limits, and lateral-only placement. Their ESICUP instances lack the parameters $T$, $m$, and $n_s$ that define our problem.}

Table~\ref{tab:cg_results} shows a summary of the column generation results per dataset. One may observe that the column generation converged only for DS-1, certifying a BFD--LP gap of
2.26\%. The LP lower bound is 14{,}166{,}101{,}333~cm$^3$, compared with a
BFD solution value of 14{,}486{,}827{,}800~cm$^3$. The resulting column set
contains 8{,}347 patterns. The subsequent RIP, solved over this column set,
yielded an integer solution of 14{,}178{,}468{,}000~cm$^3$ using 7{,}381 bins.
The LP--IP gap is 0.087\%, the final MIP gap is 0.075\%, and the BFD--IP gap is 2.17\%, indicating that most of the remaining
difference is due to the BFD heuristic rather than to the integrality gap of
the restricted master problem. %These results confirm that the set partitioning relaxation is very tight for DS-1; most of the remaining difference comes from the BFD heuristic rather than from the integrality gap of the restricted master problem.
These results confirm that the set partitioning relaxation is tight for DS-1; whether the same holds for the larger datasets remains open, since column generation did not converge for them within the time budget.

\begin{table}[htbp]
\centering
\caption{Column generation results ($m = 4$).}
\label{tab:cg_results}
\begin{tabular}{@{}lrrr@{}}
\toprule
\textbf{Dataset} & \textbf{$z^*_{\text{LP}}$ (cm$^3$)} & \textbf{BFD--LP gap} & \textbf{CG time} \\
\midrule
DS-1 & 14{,}166{,}101{,}333 & 2.26\% & 65.6 h \\
DS-2 & --- & --- & $>$5 days$^\dagger$ \\
DS-3 & --- & --- & $>$5 days$^\dagger$ \\
DS-4 & --- & --- & $>$5 days$^\dagger$ \\
\bottomrule
\end{tabular}
\par\smallskip
{\small $^\dagger$Column generation interrupted at the 5-day wall-clock limit during the first RMP solve, before any pricing iteration completed.}
\end{table}

For DS-2, DS-3, and DS-4, column generation was interrupted at the 5-day
wall-clock limit during the first RMP solve, before any pricing iteration could
be completed. The bottleneck is the simplex resolution of the initial RMP: with
$|\mathcal{B}| > 10^5$ covering constraints and $|\mathcal{P}'_0| > 10^5$
columns, where $\mathcal{P}'_0$ denotes the initial pattern set generated from
the BFD solution, the first LP solve alone exceeds the available computation
time. This limitation is consistent with the scalability experiments, which show that the CG convergence frontier lies at approximately $|\mathcal{S}| = 2{,}000$
($|\mathcal{B}| \approx 2{,}000$--$4{,}000$) for these three datasets. This limitation is computational rather than methodological: scaling column
generation to these full-scale instances would require reducing the effective
size of the master problem, for example through constraint aggregation, master
problem decomposition, or stronger warm-start strategies.

\section{Conclusions}
\label{sec:conclusions}

This paper introduces and studies the bin dimensioning problem in e-commerce
fulfillment centers, a large-scale variant of the assortment problem motivated by the company's real operational needs.
The problem asks to select bin types from a discrete catalog and assign all items to bins so as to minimize total
bin volume, subject to a limit on the number of distinct SKUs per bin, a
per-SKU quantity limit, and a stacking rule that restricts items of different
SKUs to side-by-side placement along the bin length.

The central methodological contribution is a dimensional decomposition that
reduces the original three-dimensional packing problem to a one-dimensional
block-positioning problem. This decomposition exploits the interplay between the
stacking constraint and the identity of items within each SKU: items of the same
SKU are aggregated into blocks, the stacking rule eliminates the Y and Z
coordinates from the non-overlap constraints between blocks of distinct SKUs,
and the optimal X-dimension of each block is computed in $O(1)$ time by
maximizing the packing density in the Y--Z cross-section. The reduction is what
makes the problem tractable at industrial scale, transforming a
three-dimensional geometric packing problem with quadratic non-overlap
constraints into a one-dimensional capacity problem amenable to classical bin
packing techniques.

We proposed three solution approaches built on this decomposition: a compact
MILP formulation that serves as a formal problem statement, a scalable
Best-Fit-Decreasing heuristic with isolated-block pre-filter and round-robin
ordering that produces 
feasible solutions for the largest
instances, and a column generation scheme with per-type pricing subproblems
that yields tight LP lower bounds.

%On the four synthetic datasets generated from proprietary real-world data, the BFD heuristic scaled effectively to instances with up to 1.7~million blocks, with execution times ranging from a few seconds to 34.1~hours.  Scalability experiments on sub-instances of increasing size show that, for DS-2, DS-3, and DS-4, column generation converges up to approximately $|\mathcal{S}| = 2{,}000$ SKUs, with runtimes up to 7.96~hours. For DS-1, column generation converges at all tested sizes, including the full instance with 11{,}830 blocks and a runtime of 65.6~hours. On the full DS-1 instance, column generation certified a BFD--LP gap of 2.26\%. The subsequent RIP solved over the generated column set produced an integer solution with an LP--IP gap of only 0.087\%, confirming the tightness of the set partitioning relaxation for this dataset. For the three larger full-scale datasets, the initial RMP solve exceeded the 5-day wall-clock limit, consistent with the scalability frontier. The experiments also reveal a structural divide in solution quality: the BFD--LP gap is about 2\% for DS-1, around 21--23\% for DS-2 and DS-3, and about 46\% for DS-4.

We validated this methodology on four synthetic datasets calibrated on
proprietary real-world data provided by the company, deliberately selected to
span both the scale and the profile diversity of its network: from 1{,}556 to
1{,}520{,}442 SKUs, or equivalently, from 11{,}830 to 1{,}725{,}931 blocks and
from 0.3 to 4.0 million individual items across large- and small-product
fulfillment centers. The BFD heuristic solves all four datasets at full scale
on a single 8-core node with 32~GB of RAM, with runtimes ranging from
2~seconds on DS-1 to 34.1~hours on DS-4, the largest instance, with
1.7~million blocks. The entire inventory of an industrial fulfillment center
is therefore dimensioned in a single run, at instance sizes several orders of
magnitude beyond those considered in the assortment literature
(Section~\ref{sec:related_works}).

Column generation supplies the certification that makes these solutions
verifiable rather than merely fast. For DS-1 it converges at every tested
size, including the full instance (11{,}830 blocks, 65.6~hours), where it
certifies a BFD--LP gap of 2.26\%; the RIP solved over the generated column
set returns an integer solution within 0.087\% of the LP bound, showing that
the residual difference is attributable to the heuristic rather than to the
integrality gap of the set partitioning relaxation. For DS-2, DS-3, and DS-4,
convergence is attained on sub-instances of up to approximately
$|\mathcal{S}| = 2{,}000$ SKUs, with runtimes of up to 7.96~hours, whereas at
full scale the initial RMP solve exceeds the 5-day wall-clock limit. This
frontier is computational rather than methodological, and the two methods
complement each other: BFD delivers solutions at industrial scale, and column
generation quantifies how much volume they leave on the table.

The experiments further reveal a structural divide in solution quality,
driven by the interaction between the operational constraints and the product
profile of each facility. Where column generation converges, the BFD--LP gap
is about 2\% for DS-1, 21--23\% for DS-2 and DS-3, and about 46\% for DS-4,
and volumetric efficiency moves in the opposite direction, from 74.3\% on
DS-1 down to 46.1\% on DS-4.This divide is not an artifact of instance size: within each dataset, efficiency remains essentially constant across the full range of tested instance sizes. Rather, it is driven by the product profile. DS-4 combines the smallest items with the highest proportion of unit-demand SKUs (57.80\%). Even with a bin catalog specifically designed for its product category, the SKU-per-bin limit and thelateral-only placement rule leave more than half of the available bin volume unused.
This finding has a direct practical reading: bin catalogs, and the operational
rules that constrain them, should be dimensioned per facility profile rather
than uniformly across the network, and the facilities whose achievable
utilization is lowest are precisely those where a method stronger than BFD has
the most volume to recover.

Several directions for future work emerge from these results. On the
methodological side, constraint aggregation or decomposition strategies for
the master problem could extend column generation to the larger datasets where
the current approach is limited by the size of the initial RMP; 
a branch-and-price algorithm would be the natural next step toward closing the
optimality gap exactly, rather than relying on the restricted integer program.
On the application side, this work constitutes the first stage of a broader project with the company. Having determined the catalog of bin types required to accommodate peak-demand inventories such as those arising during Black Friday, the subsequent stages
address two further problems: how to distribute the selected bins across the
physical shelving structure of each fulfillment center, and how to assign
items to specific bin positions so as to minimize the time and effort required
by warehouse operators during put-away and picking operations. Together, these
three stages form a complete storage design pipeline, from bin sizing to rack
layout to item placement.

\section*{Acknowledgments}
We acknowledge financial support from FAPESP 
(grants~2026/00417-9,  2022/05803-3) and  Conselho Nacional de Desenvolvimento Científico e Tecnológico (CNPq) (grants 305157/2025-6, 403735/2021-1). The computational experiments were carried 
out using the computational resources of the Center for 
Mathematical Sciences Applied to Industry (CeMEAI), funded by 
FAPESP (grant~2013/07375-0).

%==============================================================================
% REFERENCES
%==============================================================================

\bibliographystyle{apalike}
\bibliography{references}

%==============================================================================
% APPENDIX
%==============================================================================

\end{document}